\documentclass{amsart}
\usepackage{amsfonts}
\usepackage{graphicx}
\usepackage{amscd}
\usepackage{amsmath}
\usepackage{amssymb}
\usepackage{stmaryrd} 

\makeatletter
\@namedef{subjclassname@2010}{%
  \textup{2010} Mathematics Subject Classification}
\makeatother

\setcounter{MaxMatrixCols}{30}
\theoremstyle{plain}
\newtheorem*{acknowledgement}{Acknowledgement}

\newtheorem{corollary}{\bf Corollary}
\newtheorem{definition}{\bf Definition}
\newtheorem{lemma}{\bf Lemma}

\newtheorem{proposition}{\bf Proposition}
\newtheorem{remark}{Remark}

\newtheorem{conjecture}{\bf Conjecture}
\newtheorem{theorem}{\bf Theorem}

\theoremstyle{definition}

\numberwithin{equation}{section}

\title[Critical point equation]{On Critical Point Equation of Compact Manifolds with Zero radial Weyl Curvature}

\author{H. Baltazar}

\address[H. Baltazar]{Departamento de Matem\'{a}tica, Universidade Federal do Piau\'{\i}\\
64049-550 Te\-re\-si\-na, Piau\'{\i}, Brazil.}
\email{halyson@ufpi.edu.br}

\subjclass[2010]{Primary 53C25, 53C20, 53C21; Secondary 53C65}
\keywords{Critical point equation; Einstein manifold; Yamabe invariant; Weyl tensor}
\date{September 05, 2017}

\begin{document}

\newcommand{\spacing}[1]{\renewcommand{\baselinestretch}{#1}\large\normalsize}
\spacing{1.2}

\begin{abstract}
Let $\mathcal{C}$ be the space of smooth metrics $g$ on a given compact ma\-ni\-fold $M^{n}$ ($n\geq3$) with constant scalar curvature and unitary volume.  The goal of this paper is to study the critical point of the total scalar curvature functional restricted to the space $\mathcal{C}$ (we shall refer to this critical point as CPE metrics) under assumption that $(M,g)$  has zero radial Weyl curvature. Among the results obtained, we emphasize that in 3-dimension we will be able to prove that a CPE metric with nonnegative sectional curvature must be isometric to a standard $3$-sphere. We will also prove that a $n$-dimensional, $4\leq n\leq10,$ CPE metric satisfying a $L^{n/2}$-pinching condition will be isometric to a standard sphere. In addition, we shall conclude that such critical metrics are isometrics to a standard sphere under fourth-order vanishing condition on the Weyl tensor.
\end{abstract}

\maketitle

\section{Introduction}\label{intro}
Throughout this paper, we always assume that $M$ is an $n$-dimensional compact (without boundary) oriented Riemannian manifold with dimension at least three.
With this consideration, let $\mathcal{M}$ the set of Riemannian metrics on $M^{n}$ of volume 1, and $\mathcal{C}\subset\mathcal{M}$ the subset of Riemannian metrics with constant scalar curvature. We define the total scalar curvature functional $\mathcal{R}:\mathcal{M}\rightarrow\mathbb{R}$, as follows
\begin{equation}\label{scal}
\mathcal{R}(g)=\int_{M}R_{g}dM_{g}.
\end{equation}
It is well-known that critical point of this functional are precisely Einstein metrics, see for instance \cite[Chapter 4]{besse}.

Now, if we consider the functional in $\eqref{scal}$ restricted to $\mathcal{C},$ it is not difficult to see that, the Euler-Lagrangian equation is given by
\begin{equation}\label{cpeint}
Ric-\frac{R}{n}g=Hessf-\left(Ric-\frac{R}{n-1}g\right)f,
\end{equation}
for some smooth function $f$ defined on $M^{n}.$ Here, $Ric,\;R$ and $Hess$ stand for the Ricci tensor, the scalar curvature and the Hessian form on $M^{n},$ respectively. Moreover, taking the trace in (\ref{cpeint}) we obtain
$$\Delta f+\frac{R}{n-1}f=0.$$
In particular, $f$ is an eigenfunction of the Laplacian and, since the Laplacian has non-positive spectrum, we may conclude that the scalar curvature $R$ must be po\-si\-ti\-ve.

We also notice that, if $f$ is constant, then $f=0$ and the metric must be Einstein. Hence, from now on, we consider only the case when $(M^{n},g,f)$ is a non-trivial solution of the Equation (\ref{cpeint}).

Following the terminology used in \cite{BR14,BR15,CHY10,CHY14,Hwang2000} we recall the definition of CPE metrics.

\begin{definition}
\label{def1} A CPE metric is a 3-tuple $(M^n,\,g,\,f),$ where $(M^{n},\,g),$ is a compact oriented Riemannian manifold of dimension at least three with constant scalar curvature and $f: M^{n}\to \Bbb{R}$ is a non-constant smooth function satisfying equation (\ref{cpeint}). Such a function $f$ is called a potential function.
\end{definition}

It was conjectured in 1980's that a critical metric of the total scalar curvature functional, restricted to the space $\mathcal{C}$ must be Einstein. Moreover, if a non-trivial solution $(M^{n},g,f)$ of (\ref{cpeint}) is Einstein then, after to apply the Obata's theorem (cf. \cite{Obata}), we can deduce that $(M^{n},g)$ is isometric to a standard sphere. The conjecture was proposed in \cite{besse} and here we will present this problem in the following way.

\begin{conjecture}\label{conjCPE}
A CPE metric is always Einstein.
\end{conjecture}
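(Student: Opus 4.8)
The plan is to prove the conjecture by showing directly that the trace-free Ricci tensor $\mathring{Ric}:=Ric-\frac{R}{n}g$ vanishes identically; this is precisely the Einstein condition sought, and then the remarks above (via Obata's theorem) upgrade it to the round sphere. First I would recast \eqref{cpeint} in trace-free form. Since the trace already gives $\Delta f=-\frac{R}{n-1}f$ with $R$ a positive constant, a short computation shows that the trace-free Hessian satisfies the clean fundamental equation
\begin{equation}\label{plan-fund}
Hessf-\frac{\Delta f}{n}\,g=(1+f)\,\Big(Ric-\frac{R}{n}g\Big).
\end{equation}
Moreover, from the contracted second Bianchi identity and $R=\mathrm{const}$ one gets that $\mathring{Ric}$ is divergence-free, $\operatorname{div}\mathring{Ric}=0$. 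These two facts are the backbone of the argument.

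Next, contracting \eqref{plan-fund} with $\mathring{Ric}$ and integrating over the closed manifold $M$, then integrating by parts and using $\operatorname{div}\mathring{Ric}=0$ together with the tracelessness of $\mathring{Ric}$, I obtain the global identity
\begin{equation}\label{plan-id1}
\int_M (1+f)\,|\mathring{Ric}|^2\,dM=0.
\end{equation}
If one could show $1+f>0$ on all of $M$, then \eqref{plan-id1} would force $\mathring{Ric}\equiv 0$ and the conjecture would follow at once. Thus the first substantive task is to control the sign of $1+f$, equivalently to understand the nodal set $\{f=-1\}$; nothing in the hypotheses guarantees this a priori.

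To extract a conclusion without assuming $f>-1$, I would differentiate \eqref{plan-fund} once more and antisymmetrize, invoking the Ricci commutation identity and substituting the decomposition of the Riemann tensor into its Weyl and Ricci parts. Writing the Cotton tensor as $C_{ijk}=\nabla_i\mathring{R}_{jk}-\nabla_j\mathring{R}_{ik}$ (valid since $R$ is constant), this yields a structural identity whose leading terms are
\begin{equation}\label{plan-cotton}
(1+f)\,C_{ijk}=W_{ijk\ell}\,\nabla^{\ell}f+\mathring{R}_{jk}\,\nabla_i f-\mathring{R}_{ik}\,\nabla_j f+\cdots,
\end{equation}
the omitted terms being pure multiples of $g$ contracted with $\nabla f$. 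Contracting \eqref{plan-cotton} with $C^{ijk}$, using the relation $\operatorname{div}W=\frac{n-3}{n-2}\,C$, multiplying by $(1+f)$ and integrating by parts produces, after combining with \eqref{plan-id1}, an identity of the form
\begin{equation}\label{plan-id2}
\int_M (1+f)\,|C_{ijk}|^2\,dM=c_n\int_M (1+f)\,W_{ikj\ell}\,\mathring{R}^{ij}\mathring{R}^{k\ell}\,dM,
\end{equation}
for a dimensional constant $c_n>0$. Were the right-hand side non-positive, \eqref{plan-id2} would give $C\equiv0$ (at least where $1+f>0$), which combined with \eqref{plan-id1} and a maximum-principle analysis of $|\mathring{Ric}|^2$ should close the argument.

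The hard part, and the point where this plan genuinely stalls in full generality, is twofold and intertwined. On one hand, the sign of $1+f$ is not controlled, so neither \eqref{plan-id1} nor \eqref{plan-id2} is immediately sign-definite; one must rule out or understand the set $\{f=-1\}$. On the other hand, the Weyl coupling $W_{ikj\ell}\mathring{R}^{ij}\mathring{R}^{k\ell}$ in \eqref{plan-id2}, together with the radial Weyl term $W_{ijk\ell}\nabla^{\ell}f$ in \eqref{plan-cotton}, has no algebraic sign in dimension $n\ge 4$: there is no purely pointwise reason for it to be dominated by the $\mathring{Ric}$ and $\nabla f$ contributions. Eliminating or controlling this Weyl contribution without an extra curvature assumption is exactly the obstruction that keeps the conjecture open, and it is precisely why hypotheses such as zero radial Weyl curvature, harmonic Weyl tensor, or an $L^{n/2}$-pinching bound are imposed to make the right-hand side of \eqref{plan-id2} tractable. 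A complete proof of the conjecture as stated would require a new mechanism — beyond the integral identities above — to simultaneously sign the Weyl coupling term and the factor $1+f$.
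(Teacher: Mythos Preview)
The statement you are addressing is a \emph{conjecture}, not a theorem: the paper explicitly presents it as open (Conjecture~\ref{conjCPE}) and only establishes it under additional curvature hypotheses (Theorems~\ref{nDCPE}--\ref{CPEdiv4W}). There is therefore no ``paper's own proof'' to compare against, and your proposal is likewise not a proof --- as you yourself honestly concede in the final paragraph.

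What you have written is a correct diagnosis of why the problem is hard, and it aligns with the paper's machinery. Your identity \eqref{plan-id1} is exactly the paper's equation \eqref{keyequality} after dividing by the positive constant $R$; the observation that $1+f>0$ would finish the proof is precisely Hwang's theorem, cited in the introduction. Your structural identity \eqref{plan-cotton} is the content of Lemma~\ref{L1} and equation \eqref{CTW}, where the ``omitted terms'' are packaged into the auxiliary tensor $T_{ijk}$. Your two named obstructions --- the lack of sign control on $1+f$ and the uncontrolled Weyl coupling $W_{ikjl}\mathring{R}^{ij}\mathring{R}^{kl}$ --- are exactly what the paper's extra hypotheses (zero radial Weyl curvature, nonnegative sectional curvature, $L^{n/2}$-pinching, $\mathrm{div}^4W=0$) are designed to neutralize.

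One technical quibble: the precise form you claim for \eqref{plan-id2} is not quite what emerges. The integral identities the paper actually derives (see \eqref{KeyID} and \eqref{intweylradial}) carry weights $(1+f)^2$ rather than $(1+f)$, and involve additional terms such as $(1+f)^2|\nabla Ric|^2$ and $\mathrm{tr}(\mathring{Ric}^3)$; moreover the term $\int_M(1+f)C_{ijk}W_{ijkl}\nabla_l f\,dM_g$ does not disappear without the zero radial Weyl assumption. So your \eqref{plan-id2} as stated is schematic rather than literal. Since you are not claiming a proof this does not change the overall assessment, but if you intend to use such an identity as a starting point for a genuine attack you should derive it carefully rather than by analogy.
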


The proof of the Conjecture~\ref{conjCPE} has been a subject of interest for many authors. In \cite{lafontaine}, Lafontaine showed that the Conjecture~\ref{conjCPE} is true for a locally conformally flat manifold. Later on, this result was improved by Chang, Hwang and Yun under harmonic curvature assumption which is clearly weaker than locally conformally flat condition considered in Lafontaine's result (see \cite{CHY14,CHY16} for a complete solution). This same authors in \cite{CHY12}, were able to solve the conjecture for a manifold satisfying the parallel Ricci tensor condition. In \cite{Hwang2000}, Hwang proved the CPE conjecture provided $f\geq-1.$ At the same time, with a suitable constrain, Hwang was able to conclude that, if the second homology group of a $3$-manifold vanishes, then it is diffeomorphic to $\mathbb{S}^{3}$ (see \cite{Hwang03}). Meanwhile, Ribeiro Jr. and Barros in \cite{BR14} showed that the conjecture is also true for $4$-dimensional half conformally flat manifolds. It is important to say that this result was improved by the same authors and Leandro in \cite{BR15} under harmonicity of the self-dual part of Weyl tensor. In despite some important progresses, it remains a big challenge to prove the Besse's conjecture. For more references on CPE metrics, see \cite{Benjamim,CHY10,Hwang13,Leandro15,QY,Alex} and references therein.

Before presenting our first result, it is fundamental to remember that a Riemannian manifold $(M^{n},\,g)$ has {\it zero radial Weyl curvature} when, for a suitable potential function $f$ on $M^n,$ $W(\,\cdot\,,\,\cdot\,,\,\cdot\,,\nabla f)=0.$ This class of manifolds clearly includes the case of locally conformally flat manifolds. We would like to say that, this condition have been used to classify generalized quasi-Einstein manifolds (cf., for instance,  \cite{catino,PW,Bleandro} and \cite{pwylie}). Here, we shall use this condition to obtain the following result.

\begin{theorem}\label{nDCPE}
The CPE conjecture is true for $n$-dimensional ($n\geq3$) manifolds with nonnegative sectional curvature satisfying the zero radial Weyl curvature condition.
\end{theorem}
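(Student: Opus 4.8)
The plan is to use the zero radial Weyl hypothesis to make the Cotton tensor of the CPE metric completely explicit, and then to run a Bochner/integral argument in which nonnegativity of the sectional curvature controls the error terms, ultimately forcing the traceless Ricci tensor to vanish.

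First I would rewrite \eqref{cpeint} in trace-free form: $(1+f)\,\mathring{Ric}$ equals the trace-free part of $\mathrm{Hess}\,f$, where $\mathring{Ric}=Ric-\frac{R}{n}g$; recall that $R$ is a positive constant and that $\mathrm{div}\,\mathring{Ric}=0$. Differentiating this relation, antisymmetrizing, and invoking the Ricci identity together with the decomposition of the curvature tensor yields the standard identity
\begin{equation}\label{cottonid}
(1+f)\,C_{ijk}=W_{ijk\ell}\nabla^{\ell}f+\frac{n-1}{n-2}\big(\mathring{R}_{ik}\nabla_{j}f-\mathring{R}_{jk}\nabla_{i}f\big)+\frac{1}{n-2}\big(g_{ik}\,\mathring{R}_{j\ell}\nabla^{\ell}f-g_{jk}\,\mathring{R}_{i\ell}\nabla^{\ell}f\big),
\end{equation}
where $C$ is the Cotton tensor. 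Under the hypothesis $W(\cdot,\cdot,\cdot,\nabla f)=0$ the first term drops out, so $(1+f)\,C$ becomes an explicit algebraic combination of $\mathring{Ric}$ and $df$.

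Next I would extract integral information. Contracting \eqref{cottonid} with $C^{ijk}$, integrating by parts, and using $\mathrm{div}\,\mathring{Ric}=0$, $g^{ik}C_{ijk}=0$ and \eqref{cpeint} to trade $\mathrm{Hess}\,f$ for $(1+f)\mathring{Ric}$, I expect to reach a formula of the shape
\[
\int_{M}(1+f)\,|C|^{2}\,dM+\frac{2(n-1)}{n-2}\int_{M}(1+f)\,\mathrm{tr}\big(\mathring{Ric}^{3}\big)\,dM=\frac{R}{n}\int_{M}|\mathring{Ric}|^{2}\,dM,
\]
supplemented by the elementary relations $\int_{M}(1+f)|\mathring{Ric}|^{2}\,dM=0$ (contract \eqref{cpeint} with $\mathring{Ric}$ and integrate) and $\int_{M}(1+f)^{2}|\mathring{Ric}|^{2}\,dM=-\int_{M}\mathring{Ric}(\nabla f,\nabla f)\,dM$ (Bochner's formula for $|\nabla f|^{2}$ combined with $\Delta f=-\tfrac{R}{n-1}f$). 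Alongside these I would record the Weitzenböck formula for $\mathring{Ric}$, of the schematic form $\tfrac12\Delta|\mathring{Ric}|^{2}=|\nabla\mathring{Ric}|^{2}-\tfrac12|C|^{2}+W_{ikj\ell}\mathring{R}^{ij}\mathring{R}^{k\ell}+(\text{cubic and quadratic terms in }\mathring{Ric})$, in which the Weyl term vanishes identically when $n=3$.

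The decisive input is the sign of the sectional curvature. Expressing, via the curvature decomposition and the zero radial Weyl condition, the sectional curvatures of planes spanned by $\nabla f$ and a vector orthogonal to it, those sectional curvatures become explicit functions of $Ric$ alone; nonnegativity then translates into pointwise inequalities for the eigenvalues of $\mathring{Ric}$ in the frame adapted to $\nabla f$, which is exactly what is needed to sign the remaining error terms, in particular $\mathring{Ric}(\nabla f,\nabla f)$ and $\mathrm{tr}(\mathring{Ric}^{3})$ (in dimension three, sec $\ge0$ is equivalent to $0\le Ric\le\tfrac{R}{2}g$). Feeding this into the identities above should yield $\int_{M}(1+f)^{2}|\mathring{Ric}|^{2}\,dM\le0$, hence $\mathring{Ric}\equiv0$; the metric is then Einstein, and Obata's theorem (cf. \cite{Obata}) identifies $(M,g)$ with a standard sphere. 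I expect the crux to be twofold: the sign of $1+f$ is not known a priori, so every $(1+f)$-weighted identity must be handled with care — which is why the argument should be organized around the manifestly nonnegative weight $(1+f)^{2}$ and around the Bochner identity for $|\nabla f|^{2}$, where the weight is automatically a square — and nonnegativity of the sectional curvature, being much weaker than nonnegativity of the curvature operator, controls only a restricted family of curvature contractions, so the zero radial Weyl condition has to be used precisely to reduce the Weitzenböck curvature term to quantities that the sectional curvature bound can absorb.
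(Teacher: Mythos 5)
Your setup is sound and overlaps with the paper's: the identity $(1+f)C_{ijk}=T_{ijk}+W_{ijkl}\nabla_{l}f$, the vanishing of the Weyl--Cotton coupling under the radial condition, the relation $\int_{M}(1+f)|\mathring{Ric}|^{2}dM_{g}=0$, and the decision to organize everything around the weight $(1+f)^{2}$ are all ingredients of the actual proof. But the decisive step --- how nonnegative sectional curvature signs the error terms --- is where your plan breaks down. You propose to use only the sectional curvatures of planes containing $\nabla f$ to control $\mathrm{tr}(\mathring{Ric}^{3})$ and $\mathring{Ric}(\nabla f,\nabla f)$. Neither quantity is signed this way: $\mathrm{tr}(\mathring{Ric}^{3})$ can be strictly negative on manifolds with $\sec\ge 0$ (on $\mathbb{S}^{2}\times\mathbb{R}$ the traceless Ricci eigenvalues are $\tfrac13,\tfrac13,-\tfrac23$, so $\mathrm{tr}(\mathring{Ric}^{3})<0$), and radial nonnegativity only gives $Ric(\nabla f,\nabla f)\ge 0$, not $Ric(\nabla f,\nabla f)\ge\tfrac{R}{n}|\nabla f|^{2}$, so $\int_{M}\mathring{Ric}(\nabla f,\nabla f)\,dM_{g}\ge 0$ does not follow and your target $\int_{M}(1+f)^{2}|\mathring{Ric}|^{2}dM_{g}\le 0$ is out of reach by this route.

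The mechanism that actually works, and that the paper uses, is different in two respects. First, the curvature hypothesis is used through the pointwise identity
\begin{equation*}
R_{ij}R_{ik}R_{jk}-R_{ijkl}R_{ik}R_{jl}=\frac{1}{2}\sum_{i,j}(\lambda_{i}-\lambda_{j})^{2}K_{ij}\ge 0,
\end{equation*}
which involves \emph{all} sectional curvatures, not just radial ones, and which signs precisely the combination $\frac{R}{n-1}|\mathring{Ric}|^{2}+\frac{n}{n-2}\mathrm{tr}(\mathring{Ric}^{3})-W_{ijkl}\mathring{R}_{ik}\mathring{R}_{jl}$ produced by the Bochner formula for $|Ric|^{2}$ --- no individual term in it has a sign. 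Second, one needs an integral identity in which this combination appears with a \emph{positive} coefficient alongside the manifestly nonnegative terms $\int_{M}(1+f)^{2}|\nabla Ric|^{2}$ and $\int_{M}(1+f)^{2}|C|^{2}$, and with total sum zero; the paper manufactures it by computing $\int_{M}|\mathring{Ric}|^{2}|\nabla f|^{2}dM_{g}$ in two independent ways (from two distinct divergence formulas for $div((1+f)\nabla|Ric|^{2})$) and subtracting. Once every term is forced to vanish, the conclusion is reached via $\nabla Ric\equiv 0$ (using that $\{f=-1\}$ has measure zero) and the known rigidity of CPE metrics with parallel Ricci tensor, rather than by directly showing $\mathring{Ric}\equiv 0$. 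Without the identity above and the two-way computation that produces the right signed combination, your argument does not close.
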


It is well known that, for $n=3,$ the Weyl tensor vanishes identically. Then, when we restrict to 3-dimensional case, it is easy to verify that a CPE metric with nonnegative sectional curvature must be isometric to a standard 3-sphere. In fact, we establish the following.

\begin{corollary}\label{3DCPE}
The CPE conjecture is true for $3$-dimensional manifolds with nonnegative sectional curvature.
\end{corollary}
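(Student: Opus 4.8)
The plan is to reduce Corollary \ref{3DCPE} to Theorem \ref{nDCPE} by exploiting the well-known fact that in dimension three the Weyl tensor vanishes identically. First I would observe that, since $W \equiv 0$ on any $3$-dimensional Riemannian manifold, in particular $W(\,\cdot\,,\,\cdot\,,\,\cdot\,,\nabla f) = 0$ holds trivially for the potential function $f$ of any CPE metric $(M^3, g, f)$. Hence every $3$-dimensional CPE metric automatically satisfies the zero radial Weyl curvature condition. Consequently, a $3$-dimensional CPE metric with nonnegative sectional curvature falls within the scope of Theorem \ref{nDCPE}.

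Applying Theorem \ref{nDCPE} with $n = 3$, we conclude that the CPE conjecture holds for such a metric, i.e. $(M^3, g, f)$ is Einstein. The final step is to upgrade ``Einstein'' to ``isometric to the standard sphere''. For this I would invoke the argument already recalled in the introduction: if a non-trivial solution $(M^n, g, f)$ of \eqref{cpeint} is Einstein, then $Ric = \frac{R}{n}g$, so \eqref{cpeint} reduces to $Hess\, f = -\frac{R}{n(n-1)} f\, g$, and since we have shown $R > 0$ from the trace equation $\Delta f + \frac{R}{n-1} f = 0$, Obata's theorem (cf. \cite{Obata}) forces $(M^n, g)$ to be isometric to a round sphere of the appropriate radius. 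Specializing to $n = 3$ yields that $(M^3, g)$ is isometric to a standard $3$-sphere.

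There is essentially no obstacle here beyond what is already built into Theorem \ref{nDCPE}: the corollary is a direct specialization, and the only point requiring care is the bookkeeping that the nonnegative sectional curvature hypothesis is genuinely needed to invoke Theorem \ref{nDCPE} (it is not automatic in dimension three), while the zero radial Weyl curvature hypothesis of that theorem comes for free. I would therefore keep the proof short, perhaps a single sentence noting $W \equiv 0$ in dimension three, followed by the citation to Theorem \ref{nDCPE} and the standard Einstein-plus-Obata conclusion.
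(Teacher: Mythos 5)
Your proposal is correct and follows exactly the paper's route: the corollary is deduced from Theorem~\ref{nDCPE} by noting that $W\equiv 0$ in dimension three makes the zero radial Weyl curvature hypothesis automatic, with the Einstein-plus-Obata argument from the introduction supplying the final isometry with the standard sphere. No issues.
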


Now, motivated by the recent work on critical metrics of the volume functional and positive static triples due to first author and Ribeiro Jr. (see \cite{balt17}, Corollaries 1 and 2), see also the works \cite{Amb,Batista17} for the three-dimensional case, we shall obtain similar result for CPE metrics satisfying the zero radial Weyl curvature assumption. More precisely, we have:

\begin{theorem}\label{CPEokumura}
Let $(M^{n},g,f),$ $n\geq3,$ be a CPE metric with zero radial Weyl curvature satisfying
$$|\mathring{Ric}|^{2}\leq\frac{R^{2}}{n(n-1)}.$$
Then $(M,g)$ is isometric to a standard sphere.
\end{theorem}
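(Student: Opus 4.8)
The plan is to prove that the pinching forces $\mathring{Ric}\equiv 0$, so that $g$ is Einstein with $R>0$ constant; feeding $\mathring{Ric}=0$ back into \eqref{cpeint} yields $Hess\,f=-\frac{R}{n(n-1)}f\,g$, and since $f$ is non-constant Obata's theorem then identifies $(M,g)$ with the round sphere of radius $\sqrt{n(n-1)/R}$. To organize this I would first record the standard reductions: writing $\mathring{Ric}=Ric-\frac{R}{n}g$, equation \eqref{cpeint} becomes $(1+f)\mathring{Ric}=Hess\,f-\frac{\Delta f}{n}g$; since $R$ is constant the contracted second Bianchi identity gives $\operatorname{div}\mathring{Ric}=0$, while $\Delta f=-\frac{R}{n-1}f$ gives $\int_M f\,dM=0$. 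Contracting $(1+f)\mathring{Ric}=Hess\,f-\frac{\Delta f}{n}g$ with $\mathring{Ric}$ and integrating by parts produces the identity $\int_M(1+f)|\mathring{Ric}|^2\,dM=0$, which will be used repeatedly to convert $(1+f)$-weighted quadratic terms into unweighted ones; and diagonalizing $\mathring{Ric}$ shows that the pinching $|\mathring{Ric}|^2\le \frac{R^2}{n(n-1)}$ forces $Ric\ge 0$ and gives the pointwise bound $|\mathring{Ric}|^3\le \frac{R}{\sqrt{n(n-1)}}\,|\mathring{Ric}|^2$.

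The analytic heart is a Bochner-type integral identity, and this is where the zero radial Weyl curvature hypothesis is used. Differentiating \eqref{cpeint} and commuting third covariant derivatives of $f$ introduces the full curvature operator; decomposing $Rm=W+(\text{Schouten terms})$, every term in which $W$ is contracted against $\nabla f$ vanishes because $W(\cdot,\cdot,\cdot,\nabla f)=0$. Concretely, since $R$ is constant the Cotton tensor $C_{kij}=\nabla_k\mathring{R}_{ij}-\nabla_i\mathring{R}_{kj}$ admits a closed expression purely in terms of $\mathring{Ric}$, $\nabla f$, $R$ and $V_i:=\mathring{R}_{ik}\nabla^k f$, with no Weyl contribution (schematically $(1+f)C_{kij}=\frac{n-3}{n-2}(\nabla_i f\,\mathring{R}_{kj}-\nabla_k f\,\mathring{R}_{ij})-\frac{1}{n-2}(V_ig_{kj}-V_kg_{ij})-\frac{2R}{n(n-1)}(\nabla_i f\,g_{kj}-\nabla_k f\,g_{ij})$). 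Moreover, substituting $(1+f)\mathring{R}^{kl}=\nabla^k\nabla^l f-\frac{\Delta f}{n}g^{kl}$ into $\int_M(1+f)W_{ikjl}\mathring{R}^{ij}\mathring{R}^{kl}\,dM$, integrating by parts once, and using both the trace-freeness of $W$ and $W(\cdot,\cdot,\cdot,\nabla f)=0$ (together with $\operatorname{div}W=\frac{n-3}{n-2}C$ and the antisymmetry of $C$ in its first two slots) shows that $\int_M(1+f)W_{ikjl}\mathring{R}^{ij}\mathring{R}^{kl}\,dM=0$. Combining the Weitzenb\"ock formula for $\mathring{Ric}$ (which, for constant $R$ and $\operatorname{div}\mathring{Ric}=0$, expresses $\langle\mathring{Ric},\Delta\mathring{Ric}\rangle$ through $W_{ikjl}\mathring{R}^{ij}\mathring{R}^{kl}$, $\mathring{R}_{ij}\mathring{R}_{jk}\mathring{R}_{ki}$, $R|\mathring{Ric}|^2$ and $|C|^2$) with the two vanishing integrals above and with $\int_M(1+f)|\mathring{Ric}|^2\,dM=0$, I expect to arrive at an identity of the form
\[
\int_M|\nabla\mathring{Ric}|^2\,dM=a_n\int_M\mathring{R}_{ij}\mathring{R}_{jk}\mathring{R}_{ki}\,dM-b_nR\int_M|\mathring{Ric}|^2\,dM
\]
for explicit dimensional constants $a_n,b_n>0$.

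Finally I would invoke Okumura's inequality for the trace-free symmetric tensor $\mathring{Ric}$, namely $|\mathring{R}_{ij}\mathring{R}_{jk}\mathring{R}_{ki}|\le\frac{n-2}{\sqrt{n(n-1)}}|\mathring{Ric}|^3$; combined with the pointwise bound from the pinching this yields $|\mathring{R}_{ij}\mathring{R}_{jk}\mathring{R}_{ki}|\le\frac{(n-2)R}{n(n-1)}|\mathring{Ric}|^2$, and substituting into the displayed identity (the constants are arranged so that $a_n\frac{n-2}{n(n-1)}\le b_n$, which is exactly the condition the estimate requires, reflecting the sharpness of the constant $\frac{R^2}{n(n-1)}$) forces $\int_M|\nabla\mathring{Ric}|^2\,dM\le 0$, hence $\nabla\mathring{Ric}\equiv 0$. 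Then $|\mathring{Ric}|$ is constant, so $0=\int_M(1+f)|\mathring{Ric}|^2\,dM=|\mathring{Ric}|^2\int_M(1+f)\,dM=|\mathring{Ric}|^2$ (using $\int_M f\,dM=0$), whence $\mathring{Ric}\equiv 0$ and we conclude as above. I expect the main obstacle to be the derivation and bookkeeping of the Bochner identity: the natural weight $1+f$ is sign-indefinite, so the integrations by parts must be arranged so that the vanishing integrals $\int_M(1+f)|\mathring{Ric}|^2\,dM=0$ and $\int_M(1+f)W_{ikjl}\mathring{R}^{ij}\mathring{R}^{kl}\,dM=0$ are invoked exactly where needed to leave only unweighted, sign-definite quantities before Okumura's inequality is applied. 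A secondary point is the borderline case of Okumura's inequality when the pinching is not strict, but equality along the chain already gives $\nabla\mathring{Ric}\equiv 0$, so the argument still closes via $\int_M(1+f)|\mathring{Ric}|^2\,dM=0$.
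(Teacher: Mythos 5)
Your overall architecture --- the key integral $\int_M(1+f)|\mathring{Ric}|^2\,dM=0$, a Bochner-type identity, Okumura's inequality combined with the pointwise bound $|\mathring{Ric}|\le R/\sqrt{n(n-1)}$, and the endgame via parallel Ricci, unit volume and Obata --- matches the paper's. But the middle step, which is the actual content of the proof, has a genuine gap, in fact two related ones. First, your claimed vanishing $\int_M(1+f)W_{ikjl}\mathring{R}^{ij}\mathring{R}^{kl}\,dM=0$ is false: carrying out exactly the integration by parts you describe, the term $W_{ikjl}\nabla^k\mathring{R}^{ij}\nabla^l f$ does die by zero radial Weyl, but the divergence term produces $\frac{n-3}{n-2}\int_M C_{jli}\mathring{R}^{ij}\nabla^l f\,dM$, and here the two slots of $C$ contracted against the symmetric tensor $\mathring{R}^{ij}$ are the first and the \emph{third}, so the antisymmetry of $C$ in its first two slots does not kill it. Using (\ref{CID}) and $(1+f)C_{ijk}=T_{ijk}$, this term equals $\frac{n-3}{2(n-1)}\int_M(1+f)|C|^2\,dM$, which is neither zero nor of definite sign because of the weight $1+f$; the paper's identity (\ref{keyINTW}) is precisely the $(1+f)^2$-weighted version of this computation, and it is an identity relating the Weyl integral to a Cotton integral, not a vanishing statement. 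Second, the target identity $\int|\nabla\mathring{Ric}|^2=a_n\int \operatorname{tr}(\mathring{Ric}^3)-b_nR\int|\mathring{Ric}|^2$ cannot come out of the single unweighted Bochner formula you invoke: for constant scalar curvature that formula reads, after integration and the decomposition (\ref{auxidRM}),
$$0=\int_M|\nabla Ric|^{2}\,dM-\frac12\int_M|C|^{2}\,dM+\int_M\Big(\frac{R}{n-1}|\mathring{Ric}|^{2}+\frac{n}{n-2}\operatorname{tr}(\mathring{Ric}^{3})-W_{ijkl}\mathring{R}_{ik}\mathring{R}_{jl}\Big)dM,$$
and the Cotton term enters with the \emph{wrong} sign, so it cannot simply be discarded.

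The paper's way around both obstacles is the pair of Lemmas~\ref{divRIC1} and \ref{lemmapsi}: two genuinely different expressions for $\operatorname{div}((1+f)\nabla|Ric|^2)$, which, after multiplying by $(1+f)$ and integrating, are combined in (\ref{KeyID}) so that the $|C|^2$ term survives with a \emph{positive} coefficient $\frac{3n-5}{2(n-1)}$ and every quadratic quantity carries the nonnegative weight $(1+f)^2$; the identity (\ref{keyINTW}) then converts the residual Weyl integral into another nonnegative multiple of $\int(1+f)^2|C|^2$, yielding (\ref{intweylradial}), to which Okumura and the pinching apply. This second divergence identity is the idea your sketch is missing, and without it the sign of the Cotton contribution blocks the argument. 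Two smaller points: the implication $\int(1+f)^2|\nabla Ric|^2=0\Rightarrow\nabla Ric\equiv0$ needs Proposition~\ref{propLS} (the set $\{f=-1\}$ has measure zero), which you should cite; on the positive side, your derivation of $\int_M(1+f)|\mathring{Ric}|^2\,dM=0$ directly from $\operatorname{div}\mathring{Ric}=0$ is correct and is a cleaner route to (\ref{keyequality}) than the one in the text.
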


As an immediate consequence of Theorem~\ref{CPEokumura} we get the following result in the three-dimensional case.

\begin{corollary}
Let $(M^{3},g,f)$ be a CPE metric satisfying
$$|\mathring{Ric}|^{2}\leq\frac{R^{2}}{6}.$$
Then $(M^{3},g)$ is isometric to a standard sphere.
\end{corollary}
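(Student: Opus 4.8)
The plan is to obtain this corollary as a pure specialization of Theorem~\ref{CPEokumura} to dimension three. First I would recall the classical fact that on any $3$-dimensional Riemannian manifold the Weyl tensor vanishes identically, $W\equiv 0$; in particular the zero radial Weyl curvature condition $W(\,\cdot\,,\,\cdot\,,\,\cdot\,,\nabla f)=0$ holds automatically and imposes no restriction whatsoever when $n=3$. Next I would observe that for $n=3$ one has $n(n-1)=6$, so the hypothesis $|\mathring{Ric}|^{2}\leq R^{2}/6$ of the corollary is literally the pinching hypothesis $|\mathring{Ric}|^{2}\leq R^{2}/\big(n(n-1)\big)$ of Theorem~\ref{CPEokumura}. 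Hence the triple $(M^{3},g,f)$ satisfies all the hypotheses of that theorem, and I would conclude directly that $(M^{3},g)$ is isometric to a standard sphere.

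There is no genuine obstacle here beyond what is already carried out in the proof of Theorem~\ref{CPEokumura}; the corollary is an immediate consequence, and the only points to verify are the two elementary observations above (vanishing of $W$ in dimension three and the arithmetic identity $n(n-1)=6$). If one instead wanted a self-contained argument for the three-dimensional case, the natural route would be to use that in dimension three the full curvature tensor is algebraically determined by the Ricci tensor, rewrite the CPE equation (\ref{cpeint}) in the traceless form $(1+f)\mathring{Ric}=\mathring{Hess}\,f$ (using $\Delta f = -\tfrac{R}{n-1}f$), and then run a Bochner-type integral identity for a weighted norm such as $\int_{M}(1+f)|\mathring{Ric}|^{2}$ against the Okumura-type inequality $|\mathring{Ric}|^{2}\leq R^{2}/6$ to force $\mathring{Ric}\equiv 0$, after which Obata's theorem yields the round sphere. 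Since Theorem~\ref{CPEokumura} already performs precisely this type of analysis uniformly in all dimensions $n\geq 3$, the short deduction described in the first paragraph is all that is required.
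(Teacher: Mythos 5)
Your proposal is correct and matches the paper exactly: the corollary is stated there as an immediate consequence of Theorem~\ref{CPEokumura}, obtained precisely by noting that $W\equiv 0$ in dimension three (so the zero radial Weyl condition is automatic) and that $n(n-1)=6$ when $n=3$. Nothing further is needed.
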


In order to proceed, let us introduce the definition of Yamabe constant associated to a Riemannian manifold $(M^{n},g)$. It is defined by
\begin{eqnarray*}
\mathcal{Y}(M,[g])&=&\inf_{\widetilde{g}\in[g]}\frac{\int_{M}\widetilde{R}dM_{\widetilde{g}}}{(\int_{M}dM_{\widetilde{g}})^{\frac{n-2}{n}}}\\
&=&\frac{4(n-1)}{n-2}\inf_{u\in W^{1,2}(M)}\frac{\int_{M}|\nabla u|^{2}dM_{g}+\frac{n-2}{4(n-1)}\int_{M}Ru^{2}dM_{g}}{(\int_{M}|u|^{\frac{2n}{n-2}}dM_{g})^{\frac{n-2}{n}}},
\end{eqnarray*}
where $[g]$ is the conformal class of the metric $g.$

The next result was inspired by the recent work of Catino \cite{Catino16}, where the author showed that an $n$-dimensional, $4\leq n\leq 6,$ compact gradient shrinking Ricci soliton satisfying a $L^{n/2}$-pinching condition must be isometric to a quotient of the round sphere. More recently, Huang in \cite{huang} generalized the result proved by Catino for a class of manifolds well known as $\rho$-Einstein solitons (see \cite[Theorem 1.1]{huang} for more details). We refer the readers to \cite{Catino13,FU,FuXiao17,FuXiao,FX,HV} for more results on this subject. Here, we shall obtain a similar pinching assumption in order to conclude that the Besse's conjecture must be true. More precisely, we have the following result.

\begin{theorem}\label{CPEyamabe}
Let $(M^{n},g,f),$ $4\leq n\leq10,$ be a CPE metric with zero radial Weyl curvature satisfying
\begin{equation}\label{intP}
\left(\int_{M}\left|W+\frac{\sqrt{n}}{\sqrt{2}(n-2)}Ric\varowedge g\right|^{\frac{n}{2}}dM_{g}\right)^{\frac{2}{n}}\leq\sqrt{\frac{n-2}{72(n-1)}}\mathcal{Y}(M,[g]).
\end{equation}
Then $(M^{n},g)$ is isometric to a standard sphere.
\end{theorem}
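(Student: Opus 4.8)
The plan is to adapt the integral ($L^{n/2}$, Yamabe--Sobolev) pinching technique, introduced by Catino for shrinking Ricci solitons and extended by Huang, to the critical point equation. First, writing \eqref{cpeint} in trace--free form and using $\Delta f=-\frac{R}{n-1}f$, one gets
\begin{equation*}
(1+f)\mathring{Ric}=\nabla^{2}f-\frac{\Delta f}{n}\,g=\nabla^{2}f+\frac{R}{n(n-1)}f\,g.
\end{equation*}
Since $R$ is constant, the contracted second Bianchi identity gives $\mathrm{div}\,\mathring{Ric}=0$, so $\mathring{Ric}$ is divergence free and its failure to be a Codazzi tensor is measured exactly by the Cotton tensor $C_{ijk}=\nabla_{i}\mathring{R}_{jk}-\nabla_{j}\mathring{R}_{ik}$. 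Combining the above equation with the commutation of covariant derivatives yields the classical CPE identity expressing $(1+f)C_{ijk}$ in terms of $\mathring{Ric}\otimes\nabla f$ and the radial Weyl term $W_{ijkl}\nabla^{l}f$; the hypothesis $W(\,\cdot\,,\,\cdot\,,\,\cdot\,,\nabla f)=0$ kills this last term, leaving $C$ controlled purely by $\mathring{Ric}$ and $\nabla f$. This is the only place the potential $f$ and the zero radial Weyl condition really enter.

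Next I would apply the Bochner--Lichnerowicz formula to $\mathring{Ric}$: integrating $\tfrac12\Delta|\mathring{Ric}|^{2}=|\nabla\mathring{Ric}|^{2}+\langle\mathring{Ric},\Delta\mathring{Ric}\rangle$ over $M$ and expanding $\Delta\mathring{Ric}$ via the second Bianchi identity, the Cotton contributions being rewritten through the identity of the previous paragraph. After decomposing the curvature operator $Rm=W+\frac{1}{n-2}Ric\varowedge g-\frac{R}{2n(n-1)}g\varowedge g$, this produces an integral identity whose curvature part consists of $W_{ikjl}\mathring{R}^{ij}\mathring{R}^{kl}$, the cubic term $\mathring{R}_{ij}\mathring{R}_{jk}\mathring{R}_{ki}$ and a multiple of $R\,|\mathring{Ric}|^{2}$. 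A Huisken--Catino--type algebraic estimate then bounds the genuinely curvature--dependent part by $c(n)\,\big|W+\tfrac{\sqrt n}{\sqrt2(n-2)}Ric\varowedge g\big|\,|\mathring{Ric}|^{2}$; this is precisely where the combination in \eqref{intP} is forced, since (using $\langle W,T\varowedge g\rangle=0$, $|T\varowedge g|^{2}=4(n-2)|T|^{2}$ for trace--free $T$, and $(T\varowedge g)_{ikjl}\mathring R^{ij}\mathring R^{kl}=-2\,\mathring R_{ij}\mathring R_{jk}\mathring R_{ki}$ when $T=\mathring{Ric}$) this operator is the one whose norm controls $W_{ikjl}\mathring{R}^{ij}\mathring{R}^{kl}$ together with the sharp multiple of the cubic Ricci term.

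The remaining steps are analytic. By H\"older's inequality,
\begin{equation*}
\int_{M}\big|W+\tfrac{\sqrt n}{\sqrt2(n-2)}Ric\varowedge g\big|\,|\mathring{Ric}|^{2}\,dM\le\Big(\int_{M}\big|W+\tfrac{\sqrt n}{\sqrt2(n-2)}Ric\varowedge g\big|^{\frac n2}dM\Big)^{\frac2n}\Big(\int_{M}|\mathring{Ric}|^{\frac{2n}{n-2}}dM\Big)^{\frac{n-2}{n}},
\end{equation*}
and, by the definition of the Yamabe constant together with the Kato inequality $|\nabla|\mathring{Ric}||^{2}\le|\nabla\mathring{Ric}|^{2}$ applied to $u=|\mathring{Ric}|$,
\begin{equation*}
\Big(\int_{M}|\mathring{Ric}|^{\frac{2n}{n-2}}dM\Big)^{\frac{n-2}{n}}\le\frac{4(n-1)}{(n-2)\,\mathcal Y(M,[g])}\Big(\int_{M}|\nabla\mathring{Ric}|^{2}\,dM+\frac{n-2}{4(n-1)}\int_{M}R\,|\mathring{Ric}|^{2}\,dM\Big).
\end{equation*}
Substituting these two inequalities into the Bochner identity and using that $R$ is a positive constant, one arrives at an estimate of the form
\begin{equation*}
\Big(1-\frac{C_{n}}{\mathcal Y(M,[g])}\Big(\int_{M}\big|W+\tfrac{\sqrt n}{\sqrt2(n-2)}Ric\varowedge g\big|^{\frac n2}dM\Big)^{\frac2n}\Big)\int_{M}|\nabla\mathring{Ric}|^{2}\,dM+(\text{nonnegative terms})\le0,
\end{equation*}
valid for $4\le n\le10$; here $C_{n}=\big(\sqrt{\tfrac{n-2}{72(n-1)}}\big)^{-1}$, and the dimensional restriction is exactly what keeps the remaining coefficients coming from the Bochner expansion and the Kato inequality of the correct sign. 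Under the pinching hypothesis \eqref{intP} the bracket is nonnegative, hence $\nabla\mathring{Ric}\equiv0$; then a standard argument --- using $\int_{M}f\,dM=0$ (from $\Delta f=-\frac{R}{n-1}f$) together with $\int_{M}(1+f)|\mathring{Ric}|^{2}\,dM=\int_{M}\langle\nabla^{2}f,\mathring{Ric}\rangle\,dM=0$ --- forces $|\mathring{Ric}|^{2}=|\mathring{Ric}|^{2}\int_{M}(1+f)\,dM=0$, so $g$ is Einstein. Finally, a non-trivial Einstein solution of \eqref{cpeint} is, by Obata's theorem, isometric to a standard sphere, as recalled after Conjecture~\ref{conjCPE}.

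The main obstacle is the content of the first two paragraphs: producing the Bochner identity for $\mathring{Ric}$ on a CPE metric with the Cotton (third--order) term handled cleanly via the equation and the zero radial Weyl curvature condition, and then performing the sharp algebraic estimate so that precisely the operator $W+\tfrac{\sqrt n}{\sqrt2(n-2)}Ric\varowedge g$ and the constant $\sqrt{\tfrac{n-2}{72(n-1)}}$ emerge --- and, along the way, making sure the sign of $1+f$ never obstructs the integrations by parts.
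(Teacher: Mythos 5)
Your overall architecture (algebraic estimate for the cubic/Weyl curvature terms, H\"older, Yamabe--Sobolev, Kato) matches the paper's, and your endgame via $\int_M(1+f)|\mathring{Ric}|^2\,dM_g=0$ is sound. But there is a genuine gap at the heart of the argument: the integral identity you propose to feed into the Sobolev inequality. You integrate the \emph{unweighted} Bochner formula $\tfrac12\Delta|\mathring{Ric}|^{2}=|\nabla\mathring{Ric}|^{2}+\langle\mathring{Ric},\Delta\mathring{Ric}\rangle$ over $M$. For a constant scalar curvature metric this yields
\begin{equation*}
0=\int_{M}|\nabla Ric|^{2}\,dM_g-\frac12\int_{M}|C_{ijk}|^{2}\,dM_g+\int_{M}\Bigl(\tfrac{R}{n-1}|\mathring{Ric}|^{2}+\tfrac{n}{n-2}tr(\mathring{Ric}^{3})-W_{ijkl}\mathring{R}_{ik}\mathring{R}_{jl}\Bigr)dM_g,
\end{equation*}
an identity in which the potential $f$ does not appear at all (so the CPE equation and the zero radial Weyl hypothesis never enter) and, worse, the Cotton term carries a \emph{negative} coefficient. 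Since $|C|^{2}$ can be as large as $4|\nabla Ric|^{2}$ pointwise, this term cannot be absorbed, and no rigidity can follow: the identity holds for every compact CSC manifold. The identity that actually drives the proof is a weighted one, Eq.\ (\ref{KeyID}), obtained not from a single Bochner formula but from comparing \emph{two different} $(1+f)$-weighted divergence formulas for $div((1+f)\nabla|Ric|^{2})$ (Lemmas~\ref{divRIC1} and~\ref{lemmapsi}), each integrated against a further factor of $(1+f)$. It is precisely this comparison, together with the identity $(1+f)C_{ijk}=T_{ijk}+W_{ijkl}\nabla_{l}f$ and the zero radial Weyl condition, that flips the sign of the Cotton contribution to the favorable $+\tfrac{3n-5}{2(n-1)}\int(1+f)^{2}|C|^{2}$ and produces the coefficient $\tfrac{3}{n-1}$ on $\int(1+f)^{2}R|\mathring{Ric}|^{2}$ from which the restriction $n\le 10$ ultimately comes (via $\tfrac{n-10}{6n-10}\le 0$).

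A correlated defect: because all the available information is carried by $(1+f)^{2}$-weighted integrals, the Yamabe--Sobolev inequality must be applied to $u=(1+f)|\mathring{Ric}|$, not to $u=|\mathring{Ric}|$ as you propose; otherwise the gradient term $\int|\nabla|\mathring{Ric}||^{2}$ produced by the Sobolev inequality cannot be matched against the weighted term $\int(1+f)^{2}|\nabla Ric|^{2}$ coming from (\ref{KeyID}), and the H\"older step does not produce the weighted $L^{2n/(n-2)}$ norm $\phi=\bigl(\int_M|(1+f)\mathring{Ric}|^{2n/(n-2)}dM_g\bigr)^{(n-2)/n}$ needed to cancel against $\phi\,\mathcal{Y}(M,[g])$. (The pinching constant is calibrated exactly so that $\sqrt{\tfrac{9(n-2)}{2(n-1)}}\cdot\sqrt{\tfrac{n-2}{72(n-1)}}=\tfrac{n-2}{4(n-1)}$, i.e.\ the two $\phi\,\mathcal{Y}$ terms cancel; it is not a bracket multiplying $\int|\nabla\mathring{Ric}|^{2}$.) Finally, note that for $n=10$ the argument only yields $C_{ijk}=0$, not Einstein, so the paper must invoke the harmonic-curvature classification of Chang--Hwang--Yun to finish; your sketch does not account for this borderline case.
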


In this last part of the paper we shall focus attention on CPE metrics with vanish condition on Weyl tensor. So, in the same spirit of the recent work due by Catino, Mastrolia and Monticelli \cite{cmm}, let us introduce the following definitions
$$div^{4}W=\nabla_{k}\nabla_{j}\nabla_{i}\nabla_{l}W_{ijkl}$$
and
$$div^{3}C=\nabla_{k}\nabla_{j}\nabla_{i}C_{ijk},$$
where $W$ and $C$ are the Weyl and the Cotton tensors, respectively (for more details about this tensor, see Section \ref{Preliminaries}).

In \cite{QY}, Qing and Yuan  studied the $3$-dimensional CPE metric satisfying $div^{3}C=0.$ More precisely, the authors showed that a three dimensional CPE metric with third order divergence-free Cotton tensor (i.e., $div^{3}C=0$) must be isometric to the standard $3$-sphere. Recently, based in the ideas developed in \cite{cmm}, Santos obtained a positive answer for Conjecture~\ref{conjCPE} under the second order divergence-free Weyl tensor condition (cf. \cite[Theorem 1]{Alex} for more details). Thus, inspired in the previous results, it is natural to ask what happens in higher dimension for a CPE metric satisfying the fourth order divergence-free Weyl tensor condition. In order to do so, we shall provide an integral formula (see Section~\ref{div4W}, Proposition~\ref{div3CCaux}) which will allow us to prove the following result.

\begin{theorem}\label{CPEdiv4W}
Let $(M^{n},g,f),$ $n\geq4,$ be a CPE metric with zero radial Weyl curvature satisfying $div^{4}W=0.$
Then $(M^{n},g)$ is isometric to a standard sphere.
\end{theorem}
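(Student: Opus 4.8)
The plan is to combine the CPE fundamental equation with the hypothesis $\mathrm{div}^4 W = 0$ and the zero radial Weyl curvature condition to derive an integral identity forcing the Weyl tensor to vanish, after which Lafontaine's theorem (the locally conformally flat case, mentioned in the introduction) finishes the job. Concretely, I would first record from Section~\ref{Preliminaries} the CPE identities: tracing \eqref{cpeint} gives $\Delta f = -\frac{R}{n-1}f$, and rewriting \eqref{cpeint} yields $(1+f)\,\mathring{Ric} = \mathrm{Hess}\,f - \frac{\Delta f}{n}g + \frac{R}{n(n-1)}fg$, i.e. an expression for $\mathrm{Hess}\,f$ in terms of $\mathring{Ric}$ and $f$. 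One then commutes derivatives on $\mathrm{Hess}\,f$ to express $\nabla_i\nabla_j\nabla_k f$ (and hence various divergences of $\mathring{Ric}$) through curvature terms; in particular the Cotton tensor $C_{ijk}$ contracted with $\nabla f$ is controlled by the zero radial Weyl assumption, since for CPE metrics $C_{ijk} = -\frac{n-2}{n-3}\,\nabla_l W_{ijkl}\cdot(\text{stuff}) + (1+f)\text{-weighted terms}$, and more usefully $W(\cdot,\cdot,\cdot,\nabla f)=0$ kills the leading interaction term in the standard Weyl–potential integral formulas.

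The heart of the argument is the integral formula advertised as Proposition~\ref{div3CCaux}: starting from $\int_M \langle \mathrm{div}^4 W, \varphi\rangle \, dM$ for an appropriate test object $\varphi$ built from $(1+f)$ and $W$ (analogous to the test functions in \cite{cmm} and \cite{QY}), integrate by parts four times to move all derivatives off $W$. Each integration by parts either produces a term that vanishes by $\mathrm{div}^4 W = 0$, or a curvature-times-Weyl term; using the contracted second Bianchi identity $\nabla_l W_{ijkl} = -\frac{n-3}{n-2} C_{ijk}$, the zero radial Weyl condition to discard terms containing $W(\cdot,\cdot,\cdot,\nabla f)$, and the CPE structure equations to replace $\mathrm{Hess}\,f$, one should arrive at an identity of the schematic form
\begin{equation*}
\int_M (1+f)\,|W|^2 \, dM_g + (\text{lower-order Weyl terms that are nonnegative or vanish}) = 0.
\end{equation*}
Here one must exploit that $f > -1$ cannot be assumed in general, so rather than pointwise positivity of $1+f$ one should arrange the weight so that the relevant coefficient has a definite sign — for instance by testing against $|W|^{2}$ weighted differently, or by first establishing (as in Hwang's work and the $\mathrm{div}^2 W=0$ case of \cite{Alex}) that the set where $1+f$ changes sign does not obstruct the conclusion, using $\Delta f + \frac{R}{n-1} f = 0$ and the maximum principle on the nodal domains.

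The main obstacle, as I see it, is twofold. First, carrying out the four integrations by parts cleanly: at fourth order there are many commutator terms, and one needs the zero radial Weyl hypothesis not just once but repeatedly (including on differentiated versions, which requires differentiating $W(\cdot,\cdot,\cdot,\nabla f)=0$ and feeding back the CPE equation for $\mathrm{Hess}\,f$) to collapse the expression to something manageable. Second, and more serious, is sign control: unlike the Ricci-soliton setting where a natural positive weight appears, the CPE potential $f$ is only an eigenfunction and can be very negative, so the final integral identity will likely be of the form $\int_M (1+f)|W|^2 = \int_M (\text{divergence}) + (\text{terms in } C_{ijk}\nabla_l f) = 0$, and one must argue that the right-hand side is nonpositive (or vanishes) — plausibly by integrating $C_{ijk}$ against $\nabla f$ again and using $|C|^2 \ge 0$ together with the zero radial Weyl condition relating $C$ and $\mathrm{div} W$. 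Once $W \equiv 0$ is secured, the manifold is locally conformally flat and Lafontaine's theorem gives that the CPE metric is Einstein, hence by Obata's theorem isometric to the round sphere $\mathbb{S}^n$, completing the proof.
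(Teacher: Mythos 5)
Your overall strategy (an integral identity obtained by repeated integration by parts, using $\mathrm{div}^4W=0$, the relation $C_{ijk}=-\frac{n-2}{n-3}\nabla_lW_{ijkl}$, and the zero radial Weyl condition) points in the right general direction, but there are two genuine gaps. First, the target is wrong: the computation you sketch cannot produce a term $\int_M(1+f)|W|^2$, and the hypotheses cannot be made to yield $W\equiv 0$ directly. Testing $\mathrm{div}^4W$ (equivalently $\mathrm{div}^3C$) against powers of $(1+f)$ and moving derivatives across produces, after substituting the CPE equation for $\mathrm{Hess}\,f$, an identity in the \emph{Cotton} tensor, not in $W$; this is exactly Proposition~\ref{div3CCaux}, which for any $p\ge 2$ reads
\begin{equation*}
\frac{2}{p}\int_{M}(1+f)^{p}\,\mathrm{div}^{3}C\,dM_{g}+\int_{M}(1+f)^{p}|C_{ijk}|^{2}\,dM_{g}=\frac{n-2}{n-1}\int_{M}(1+f)^{p-1}T_{ijk}C_{ijk}\,dM_{g}.
\end{equation*}
Under zero radial Weyl curvature, (\ref{CTW}) gives $T_{ijk}=(1+f)C_{ijk}$, so the right-hand side is absorbed into the $|C|^2$ term, and (\ref{div3Cdiv4W}) converts the hypothesis $\mathrm{div}^4W=0$ into $\mathrm{div}^3C=0$, leaving $\frac{1}{n-1}\int_M(1+f)^p|C|^2\,dM_g=0$. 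The achievable intermediate conclusion is therefore $C\equiv 0$ --- equivalently, since $R$ is constant, harmonic curvature ($\mathrm{div}\,W=0$) --- and \emph{not} $W\equiv 0$; Lafontaine's locally conformally flat theorem is consequently not applicable, and the correct closing step is the Chang--Hwang--Yun rigidity theorem for CPE metrics with harmonic curvature (\cite{CHY14}, Theorem 1.2).

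Second, the sign problem you flag is real, but your proposed workarounds (nodal domains, maximum principle on $\Delta f+\frac{R}{n-1}f=0$) are neither needed nor obviously workable. The paper's resolution is simply to carry the weight $(1+f)^p$ with an even exponent $p\ge 2$ through the whole computation: then $\int_M(1+f)^p|C|^2\,dM_g=0$ forces $C=0$ pointwise off the set $\{f=-1\}$, which has measure zero by Proposition~\ref{propLS}, hence $C\equiv 0$ everywhere by continuity. With the sign-changing weight $(1+f)$ appearing in your schematic identity no such conclusion is available, for precisely the reason you yourself identify.
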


\section{Preliminaries}
\label{Preliminaries}

Throughout this section we recall some informations and basic results that will be useful in the proof of our main results. Firstly, we note that the fundamental equation of a CPE metric (\ref{cpeint}) can be rewrite as
\begin{equation}\label{eq1:CPE}
(1+f)Ric=Hessf+\frac{R}{n}g+\frac{Rf}{n-1}g.
\end{equation}
As we saw in the introduction, tracing (\ref{eq1:CPE}) we have
\begin{equation}\label{eqtrace}
\Delta f+\frac{R}{n-1}f=0.
\end{equation}
Furthermore, by using (\ref{eqtrace}) it is not difficult to check that
\begin{equation}
\label{IdRicHess} (1+f)\mathring{Ric}=\mathring{Hess f},
\end{equation}
where $\mathring{T}$ stands for the traceless of $T.$

For sake of simplicity, we now rewrite Equation (\ref{cpeint}) in the tensorial language as follows
\begin{equation}\label{eq:tensorial}
R_{ij}-\frac{R}{n}g_{ij}=\nabla_{i}\nabla_{j}f-\left(R_{ij}-\frac{R}{n-1}g_{ij}\right)f.
\end{equation}

Let us recall three special tensors in the study of curvature for a Riemannian manifold $(M^n,\,g),\,n\ge 3.$  The first one is the Weyl tensor $W$ which is defined by the following decomposition formula
\begin{eqnarray}\label{weyl}
R_{ijkl}&=&W_{ijkl}+(A\varowedge g)_{ijkl},
\end{eqnarray}
where $R_{ijkl}$ stands for the Riemann curvature operator $Rm$ and $A_{ij}$ is the Schouten tensor, defined by
\begin{equation}
\label{defschouten} A_{ij}=\frac{1}{n-2}\left(R_{ij}-\frac{R}{2(n-1)}g_{ij}\right).
\end{equation}
Moreover, the symbol $\varowedge$ denotes the Kulkarni-Nomizu product which is defined for any two symmetric $(0,2)$-tensors $S$ and $T$ as follows
\begin{equation*}
(S\varowedge T)_{ijkl}=S_{ik}T_{jl}+S_{jl}T_{ik}-S_{il}T_{jk}-S_{jk}T_{il}.
\end{equation*}

The second tensor is the Cotton tensor $C$ given by
\begin{equation*}
\displaystyle{C_{ijk}=(n-2)(\nabla_{i}A_{jk}-\nabla_{j}A_{ik})},
\end{equation*}
which clearly becomes
\begin{equation}\label{cotton}
\displaystyle{C_{ijk}=\nabla_{i}R_{jk}-\nabla_{j}R_{ik}-\frac{1}{2(n-1)}\big(\nabla_{i}Rg_{jk}-\nabla_{j}R g_{ik}).}
\end{equation}
Note that $C_{ijk}$ is skew-symmetric in the first two indices and trace-free in any two indices. These two above tensors are related as follows
\begin{equation}\label{cottonwyel}
\displaystyle{C_{ijk}=-\frac{(n-2)}{(n-3)}\nabla_{l}W_{ijkl},}
\end{equation}provided $n\ge 4.$ As a consequence, we have the following identity:
\begin{equation}\label{div3Cdiv4W}
\displaystyle{{\rm div}^{3}C+\frac{(n-2)}{(n-3)}{\rm div}^{4}W=0}.
\end{equation}

Finally, the third tensor is the Bach tensor  which was introduced by Bach \cite{bach} in the study of conformal relativity. It is defined in terms of the components of the Weyl tensor $W_{ikjl}$ as follows
\begin{equation}
\label{bach} B_{ij}=\frac{1}{n-3}\nabla_{k}\nabla_{l}W_{ikjl}+\frac{1}{n-2}R_{kl}W_{ikjl},
\end{equation}
for $n\geq4.$ Using (\ref{cottonwyel}) it is immediate to observe that the following identity is true
\begin{equation}\label{bacha}
(n-2)B_{ij}=\nabla_{k}C_{kij}+W_{ikjl}R_{kl}.
\end{equation}
We say that a Riemannian manifold $(M^n,g)$ is Bach-flat when $B_{ij}=0.$

For our purpose, we remember that as consequence of Bianchi identity, we have
\begin{equation}\label{Bianchi}
(div Rm)_{jkl}=\nabla_{i}R_{ijkl}=\nabla_kR_{lj}-\nabla_lR_{kj}.
\end{equation}

In addition, we also recall that, from commutation formulas (Ricci identities), for any Riemannian manifold $M^n$ we have
\begin{equation}\label{idRicci1}
\nabla_i\nabla_j R_{pq}-\nabla_j\nabla_i R_{pq}=R_{ijps}R_{sq}+R_{ijqs}R_{ps},
\end{equation} for more details see \cite{chow,Via}.

Under these notations we have the following lemma. We refer the reader to \cite[Lemma 2.1]{BR14} for its proof.
\begin{lemma}\label{L1}
Let $(M^{n},g,f)$ be a CPE metric. Then we have:
\begin{equation*}
(1+f)(\nabla_{i}R_{jk}-\nabla_{j}R_{ik})=R_{ijkl}\nabla_{l}f+\frac{R}{n-1}(\nabla_{i}fg_{jk}-\nabla_{j}fg_{ik})-(\nabla_{i}fR_{jk}-\nabla_{j}f R_{ik}).
\end{equation*}
\end{lemma}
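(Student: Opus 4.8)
The plan is to differentiate the fundamental CPE equation in the form (\ref{eq1:CPE}) and then antisymmetrize in the first two indices, reducing the whole computation to a single commutator of third covariant derivatives of the potential $f$. Every term in the target identity except the curvature term $R_{ijkl}\nabla_l f$ will be produced directly by the product rule, and the constancy of the scalar curvature on a CPE metric will be used to kill all derivatives of $R$.

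First I would write (\ref{eq1:CPE}) in tensor notation as
\[(1+f)R_{jk}=\nabla_j\nabla_k f+\frac{R}{n}g_{jk}+\frac{Rf}{n-1}g_{jk},\]
and apply $\nabla_i$ to both sides. Since a CPE metric has constant scalar curvature we have $\nabla_i R=0$, so the derivative of the $\frac{R}{n}g_{jk}$ term vanishes and only $f$ differentiates in the last term, leaving
\[\nabla_i f\,R_{jk}+(1+f)\nabla_i R_{jk}=\nabla_i\nabla_j\nabla_k f+\frac{R}{n-1}\nabla_i f\,g_{jk}.\]
Solving for $(1+f)\nabla_i R_{jk}$ and antisymmetrizing in $i$ and $j$, the Hessian term $\nabla_i\nabla_j\nabla_k f$ becomes a commutator (the $\nabla_j\nabla_i\nabla_k f$ piece is its transpose), and I obtain
\[(1+f)(\nabla_i R_{jk}-\nabla_j R_{ik})=\big(\nabla_i\nabla_j\nabla_k f-\nabla_j\nabla_i\nabla_k f\big)+\frac{R}{n-1}(\nabla_i f\,g_{jk}-\nabla_j f\,g_{ik})-(\nabla_i f\,R_{jk}-\nabla_j f\,R_{ik}).\]

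It then remains only to evaluate the commutator $\nabla_i\nabla_j\nabla_k f-\nabla_j\nabla_i\nabla_k f$, which is the Ricci identity applied to the one-form $\nabla_k f$. Using the same curvature convention recorded in (\ref{idRicci1}), the one-index version of that commutation formula reads $\nabla_i\nabla_j T_k-\nabla_j\nabla_i T_k=R_{ijkl}T_l$; taking $T_k=\nabla_k f$ gives exactly $R_{ijkl}\nabla_l f$, and substituting this into the previous display yields the claimed identity. The only genuine subtlety is the sign of the curvature term: one must apply the commutation formula with precisely the convention fixed by (\ref{idRicci1}) and (\ref{Bianchi}), since the opposite convention would flip the sign of $R_{ijkl}\nabla_l f$. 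Everything else is a routine use of the product rule together with the antisymmetrization, so I expect no further obstacle.
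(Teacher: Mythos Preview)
Your proof is correct and is precisely the standard argument: differentiate the fundamental equation (\ref{eq1:CPE}), antisymmetrize in $i,j$, and identify the commutator of third derivatives of $f$ via the Ricci identity in the convention of (\ref{idRicci1}). The paper does not supply its own proof but refers to \cite[Lemma~2.1]{BR14}, where the same computation is carried out, so your approach matches what the paper intends.
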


Now, for a CPE metric, we recall the following 3-tensor defined, for instance, in \cite{BR14,BR15,Alex},
\begin{eqnarray}\label{TensorT}
T_{ijk}&=&\frac{n-1}{n-2}(R_{ik}\nabla_{j}f-R_{jk}\nabla_{i}f)-\frac{R}{n-2}(g_{ik}\nabla_{j}f-g_{jk}\nabla_{i}f)\nonumber\\
&&+\frac{1}{n-2}(g_{ik}R_{js}\nabla_{s}f-g_{jk}R_{is}\nabla_{s}f).
\end{eqnarray}
Note that, $T_{ijk}$ has the same symmetry properties as the Cotton tensor:
$$T_{ijk}=-T_{jik}\;\;\;and\;\;\;g^{ij}T_{ijk}=g^{ik}T_{ijk}=0.$$
Furthermore, we remember that the tensor $T_{ijk}$ is related to the Cotton tensor $C_{ijk}$ and the Weyl tensor $W_{ijkl}$ by
\begin{equation}\label{CTW}
(1+f)C_{ijk}=T_{ijk}+W_{ijkl}\nabla_{l}f.
\end{equation}

To close this section, let us recall a result of Hwang (see, for instance, \cite{Hwang2000,Hwang03} for more details), which plays an important role in our theorems.

\begin{proposition}\label{propLS}
Let $(M^{n},g,f)$ be a CPE metric with $f$ non-constant. Then the set $\{x\in M; f(x)=-1\}$ has measure zero.
\end{proposition}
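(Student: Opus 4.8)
The plan is to argue by contradiction: I assume that $E=\{x\in M:\,f(x)=-1\}$ has positive Riemannian measure and derive a contradiction with the trace identity \eqref{eqtrace}, which forces $\Delta f$ to equal a nonzero constant on $E$. Since nothing beyond the CPE structure is needed, I will only use \eqref{eqtrace}, \eqref{IdRicHess}, the smoothness of $f$ (so that $f\in W^{1,1}(M)$), and the fact, recalled in the Introduction, that $R$ is a \emph{positive} constant.

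First I would record the behaviour of the first derivatives of $f$ on $E$. Because $f$ is smooth, it lies in $W^{1,1}(M)$, and I will invoke the classical property of Sobolev functions that the gradient vanishes almost everywhere on each level set (Gilbarg--Trudinger, Lemma~7.7): $\nabla f=0$ almost everywhere on $E=\{f=-1\}$. The heart of the argument is to upgrade this to a statement about second derivatives. Each coordinate derivative $\partial_i f$ is again smooth, and since $\nabla f=0$ a.e.\ on $E$ we have $E\subseteq\{\partial_i f=0\}$ up to a null set; applying the same level-set lemma to $\partial_i f$ yields $\nabla(\partial_i f)=0$ a.e.\ on $E$. Combining $\partial_i\partial_j f=0$ a.e.\ on $E$ with $\partial_k f=0$ a.e.\ on $E$ in the local expression $(\mathrm{Hess}\,f)_{ij}=\partial_i\partial_j f-\Gamma^k_{ij}\partial_k f$, I conclude that $\mathrm{Hess}\,f=0$, and in particular $\Delta f=0$, almost everywhere on $E$.

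Finally I would close the argument with \eqref{eqtrace}. On $E$ we have $f\equiv-1$, so $\Delta f=-\tfrac{R}{n-1}f=\tfrac{R}{n-1}$ pointwise on $E$; as $R>0$ is constant, this contradicts $\Delta f=0$ a.e.\ on $E$ unless $E$ has measure zero. As an alternative closing step one may instead use \eqref{IdRicHess}: on $E$ the factor $1+f$ vanishes, so $\mathring{Hess}\,f=0$ pointwise there, whence $\mathrm{Hess}\,f=\tfrac{R}{n(n-1)}\,g$ is nowhere vanishing on $E$, again contradicting the almost-everywhere vanishing of $\mathrm{Hess}\,f$. I expect the main obstacle to be precisely the passage from first to second derivatives, i.e.\ the two-fold application of the level-set lemma to the components $\partial_i f$; this is the nontrivial analytic ingredient, after which the contradiction with the CPE trace identity and the positivity of $R$ is immediate.
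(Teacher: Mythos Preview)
The paper does not actually prove this proposition; it is stated as a known result of Hwang, with a reference to \cite{Hwang2000,Hwang03}, and no argument is supplied. So there is no ``paper's proof'' to compare against. Your argument is correct: the two-fold application of the Sobolev level-set lemma (first to $f$, then to each local coordinate derivative $\partial_i f$) legitimately forces $\mathrm{Hess}\,f=0$ a.e.\ on $E$, and this clashes with the pointwise identity $\Delta f=\tfrac{R}{n-1}>0$ on $E$ coming from \eqref{eqtrace} and the positivity of $R$. The passage from first to second derivatives, which you flag as the nontrivial step, is indeed justified because $E\setminus N\subseteq\{\partial_i f=0\}$ for a null set $N$, and the lemma applied to $\partial_i f$ gives $\nabla(\partial_i f)=0$ a.e.\ on $\{\partial_i f=0\}$, hence a.e.\ on $E$.

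For context, the route taken in Hwang's papers is more geometric and extracts more structure. From \eqref{IdRicHess} one has $\mathring{\mathrm{Hess}}\,f=0$ pointwise on $E$, so together with \eqref{eqtrace} the full Hessian equals $\tfrac{R}{n(n-1)}\,g$ there, which is positive definite since $R>0$. Consequently every critical point of $f$ lying on $E$ is a non-degenerate strict local minimum and is therefore isolated in $E$; by compactness there are only finitely many. At all other points of $E$ one has $\nabla f\neq 0$, so $E$ is locally a smooth hypersurface. Thus $E$ is the union of a smooth hypersurface and a finite set, hence of measure zero. That approach yields finer information about the level set (useful elsewhere in the CPE literature), while your measure-theoretic argument is shorter and uses only soft analysis; both are valid proofs of the statement as phrased.
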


\section{Bochner type formulas and applications}
In this section we will prove the Theorems \ref{nDCPE} and \ref{CPEokumura} announced in the introduction. To begin with, we shall present a couple of divergence formulas which will be crucial for the proof of main results of this section.

\begin{lemma}\label{divRIC1}
Let $(M^{n},g,f)$ be a CPE metric. Then we have:
\begin{eqnarray*}
div((1+f)\nabla|Ric|^{2})&=&2(1+f)|\nabla Ric|^{2}-(1+f)|C_{ijk}|^{2}+2(1+f)\nabla_{i}(C_{ijk}R_{jk})\\
&&+\frac{2}{n-1}(1+f)R|\mathring{Ric}|^{2}+\langle\nabla f,\nabla |Ric|^{2}\rangle\\
&&+2(1+f)\left(\frac{n}{n-2}tr(\mathring{Ric}^{3})-W_{ijkl}\mathring{R}_{ik}\mathring{R}_{jl}\right),
\end{eqnarray*}
where $\mathring{Ric}^{3}$ is the $2$-tensor defined by $(\mathring{Ric}^{3})_{ij}=\mathring{R}_{ik}\mathring{R}_{kl}\mathring{R}_{lj}.$
\end{lemma}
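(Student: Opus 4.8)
\textbf{Proof proposal for Lemma~\ref{divRIC1}.}

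The plan is to compute $\mathrm{div}\big((1+f)\nabla|Ric|^2\big)$ by expanding the divergence and then systematically replacing every curvature derivative that appears with an expression built from the Cotton tensor, the Weyl tensor, and lower-order terms, using the CPE structure. First I would write
$$
\mathrm{div}\big((1+f)\nabla|Ric|^{2}\big)=(1+f)\Delta|Ric|^{2}+\langle\nabla f,\nabla|Ric|^{2}\rangle,
$$
so that the last term of the claimed identity appears immediately, and the whole problem reduces to identifying $(1+f)\Delta|Ric|^{2}$. Next I would apply the Bochner-type expansion $\tfrac12\Delta|Ric|^{2}=|\nabla Ric|^{2}+\langle Ric,\Delta Ric\rangle$ (rough Laplacian), which produces the term $2(1+f)|\nabla Ric|^{2}$ and leaves $2(1+f)\langle R_{jk},\Delta R_{jk}\rangle$ to be analyzed.

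The heart of the computation is rewriting $\langle R_{jk},\Delta R_{jk}\rangle$. I would start from the second Bianchi identity in the form $\nabla_i R_{ijkl}=\nabla_k R_{lj}-\nabla_l R_{kj}$ (equation~\eqref{Bianchi}) and from the definition of the Cotton tensor~\eqref{cotton} to express $\Delta R_{jk}=\nabla_i\nabla_i R_{jk}$ in terms of $\nabla_i C_{ijk}$, $\nabla^2 R$, and a commutator term $\nabla_i\nabla_j R_{ik}-\nabla_j\nabla_i R_{ik}$. The commutator is handled by the Ricci identity~\eqref{idRicci1}, which brings in $R_{ijks}R_{si}$-type quadratic curvature terms; decomposing $Rm$ via~\eqref{weyl} converts those into the $W_{ijkl}\mathring R_{ik}\mathring R_{jl}$ term, the $\mathrm{tr}(\mathring{Ric}^3)$ term, and terms proportional to $R|\mathring{Ric}|^2$ and $R|Ric|^2$. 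Since $R$ is constant, $\nabla R=0$, so all terms with derivatives of $R$ drop out — this is where the CPE hypothesis (constant scalar curvature) is used crucially, and it is what makes the final formula as clean as stated. Contracting the surviving $\nabla_i C_{ijk}$ against $R_{jk}$ and integrating by parts the combination $\langle R_{jk},\nabla_i C_{ijk}\rangle$ against the factor $(1+f)$, together with the skew-symmetry and trace-freeness of $C$, should reorganize into $-(1+f)|C_{ijk}|^2+2(1+f)\nabla_i(C_{ijk}R_{jk})$ after also invoking Lemma~\ref{L1} or the identity~\eqref{CTW} to relate $(1+f)C_{ijk}$ back to derivative-of-$f$ terms where needed.

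The main obstacle I anticipate is bookkeeping in the algebraic curvature terms: after the Ricci-identity step one gets a sum of contractions of $R_{ijkl}$ with two Ricci factors, and one must carefully decompose each $R_{ijkl}=W_{ijkl}+(A\varowedge g)_{ijkl}$, expand the Kulkarni–Nomizu product, and collapse the resulting traces into exactly the combination $\tfrac{n}{n-2}\mathrm{tr}(\mathring{Ric}^3)-W_{ijkl}\mathring R_{ik}\mathring R_{jl}$ plus the $\tfrac{2}{n-1}R|\mathring{Ric}|^2$ term; getting the constants and the passage from $Ric$ to $\mathring{Ric}$ right (using $R_{ij}=\mathring R_{ij}+\tfrac{R}{n}g_{ij}$ and constancy of $R$) is delicate but routine. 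A secondary point is making sure the $\nabla_i(C_{ijk}R_{jk})$ term is produced with the correct coefficient and sign; I would track it by deferring the integration by parts of the Cotton term until the very end and using $\nabla_i\!\big((1+f)C_{ijk}R_{jk}\big)=(1+f)\nabla_i(C_{ijk}R_{jk})+C_{ijk}R_{jk}\nabla_i f$, cross-checking against the known divergence formulas in \cite{BR14,BR15,Alex}. Once all terms are collected and the constant-$R$ simplifications applied, the identity should fall out exactly as stated.
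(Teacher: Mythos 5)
Your proposal follows essentially the same route as the paper's proof: expand the divergence, compute $\Delta|Ric|^{2}$ via the rough Laplacian, trade $\nabla_{p}R_{ij}$ for the Cotton tensor plus $\nabla_{i}R_{pj}$ (using constancy of $R$), commute derivatives with the Ricci identity, and collapse the resulting cubic curvature contractions through the Weyl decomposition into $\frac{R}{n-1}|\mathring{Ric}|^{2}+\frac{n}{n-2}tr(\mathring{Ric}^{3})-W_{ijkl}\mathring{R}_{ik}\mathring{R}_{jl}$, exactly as in the identity the paper quotes from \cite{balt17}. The only cosmetic difference is that no integration by parts and no appeal to Lemma~\ref{L1} or \eqref{CTW} is actually needed: the identity is pointwise and uses only constant scalar curvature, as the remark following the lemma observes.
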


\begin{proof}
Firstly, we shall obtain an expression for the laplacian of the squared norm of the Ricci tensor. Indeed, since the scalar curvature is constant, by direct computation we get
\begin{eqnarray*}
\Delta|Ric|^{2}&=&\nabla_{p}\nabla_{p}R_{ij}^{2}\\
&=&2|\nabla Ric|^{2}+2R_{ij}\nabla_{p}\nabla_{p}R_{ij}\\
&=&2|\nabla Ric|^{2}+2R_{ij}\nabla_{p}(C_{pij}+\nabla_{i}R_{pj}).
\end{eqnarray*}
Hence, from (\ref{idRicci1}) and the skew-symmetric property of Cotton tensor, we may rewrite this last expression as follows
\begin{eqnarray}\label{deltaric}
\Delta|Ric|^{2}&=&2|\nabla Ric|^{2}+2\nabla_{p}(C_{pij}R_{ij})-|C_{ijk}|^{2}+2R_{ij}\nabla_{p}\nabla_{i}R_{pj}\nonumber\\
&=&2|\nabla Ric|^{2}+2\nabla_{p}(C_{pij}R_{ij})-|C_{ijk}|^{2}\nonumber\\
&&+2(R_{ij}R_{ik}R_{jk}-R_{ijkl}R_{ik}R_{jl}),
\end{eqnarray}
where we change some indices for simplicity.

Next, after a straightforward computation using (\ref{weyl}) and the fact that $\mathring{R}_{ij}=R_{ij}-\frac{R}{n}g_{ij},$ we arrived at
\begin{equation}\label{auxidRM}
R_{ij}R_{ik}R_{jk}-R_{ijkl}R_{ik}R_{jl}=\frac{R}{n-1}|\mathring{Ric}|^{2}+\frac{n}{n-2}tr(\mathring{Ric}^{3})-W_{ijkl}\mathring{R}_{ik}\mathring{R}_{jl}
\end{equation}
(such identity can be found in \cite[Lemma 4]{balt17}), which replacing in (\ref{deltaric}) gives
\begin{eqnarray*}
\Delta|Ric|^{2}&=&2|\nabla Ric|^{2}-|C_{ijk}|^{2}+2\nabla_{p}(C_{pij}R_{ij})+\frac{2}{n-1}R|\mathring{Ric}|^{2}\\
&&+\frac{2n}{n-2}tr(\mathring{Ric}^{3})-2W_{ijkl}\mathring{R}_{ik}\mathring{R}_{jl}.
\end{eqnarray*}
To finish, it is suffices to observe that
\begin{eqnarray*}
div((1+f)\nabla |Ric|^{2})&=&(1+f)\Delta|Ric|^{2}+\langle\nabla f,\nabla|Ric|^{2}\rangle\\
&=&2(1+f)|\nabla Ric|^{2}-(1+f)|C_{ijk}|^{2}+2(1+f)\nabla_{p}(C_{pij}R_{ij})\\
&&+\frac{2}{n-1}(1+f)R|\mathring{Ric}|^{2}+\langle\nabla f,\nabla|Ric|^{2}\rangle\\
&&+2(1+f)\left(\frac{n}{n-2}tr(\mathring{Ric}^{3})-W_{ijkl}\mathring{R}_{ik}\mathring{R}_{jl}\right).
\end{eqnarray*}
So, the proof is completed.
\end{proof}

\begin{remark}
Notice that the above lemma must be true for an arbitrary $n$-dimensional Riemannian manifold with constant scalar curvature and a smooth function $f.$
\end{remark}

The next lemma provide another expression for $div((1+f)\nabla|Ric|^{2}).$

\begin{lemma}\label{lemmapsi}
Let $(M^{n},g,f)$ be a CPE metric. Then we have:
\begin{eqnarray*}
\frac{1}{2}div((1+f)\nabla|Ric|^{2})&=&-(1+f)|C_{ijk}|^{2}+(1+f)|\nabla Ric|^{2}+\langle\nabla f,\nabla |Ric|^{2}\rangle\\
&&+\frac{R}{n-1}|\mathring{Ric}|^{2}+2\nabla_{i}((1+f)C_{ijk}R_{jk}).
\end{eqnarray*}
\end{lemma}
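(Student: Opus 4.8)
The plan is to derive the asserted formula for $\tfrac12\,\mathrm{div}\big((1+f)\nabla|Ric|^2\big)$ by combining the expression already obtained in Lemma~\ref{divRIC1} with a pointwise identity that rewrites the ``extra'' curvature terms $\frac{n}{n-2}\mathrm{tr}(\mathring{Ric}^3)-W_{ijkl}\mathring{R}_{ik}\mathring{R}_{jl}$ (together with the divergence term $\nabla_i(C_{ijk}R_{jk})$) in the more compact shape appearing in Lemma~\ref{lemmapsi}. Comparing the two statements, it suffices to show the bookkeeping identity
\begin{equation*}
\frac{n}{n-2}\mathrm{tr}(\mathring{Ric}^3)-W_{ijkl}\mathring{R}_{ik}\mathring{R}_{jl}
=\nabla_i\!\big((1+f)C_{ijk}R_{jk}\big)-(1+f)\,\nabla_i(C_{ijk}R_{jk})-\frac{R}{2(n-1)}|\mathring{Ric}|^2\cdot\text{(something)},
\end{equation*}
so the real content is to express $(1+f)\big(\tfrac{n}{n-2}\mathrm{tr}(\mathring{Ric}^3)-W_{ijkl}\mathring{R}_{ik}\mathring{R}_{jl}\big)$ as a total divergence plus lower-order terms. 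The natural tool for this is the 3-tensor $T_{ijk}$ from \eqref{TensorT} and its relation \eqref{CTW}, namely $(1+f)C_{ijk}=T_{ijk}+W_{ijkl}\nabla_l f$, which lets one trade Weyl-times-Ricci contractions for Cotton and $T$ contractions.

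Concretely, I would proceed as follows. First, start from the conclusion of Lemma~\ref{divRIC1} and isolate the term $2(1+f)\big(\tfrac{n}{n-2}\mathrm{tr}(\mathring{Ric}^3)-W_{ijkl}\mathring{R}_{ik}\mathring{R}_{jl}\big)+2(1+f)\nabla_i(C_{ijk}R_{jk})$; the goal is to show this equals $2\nabla_i\big((1+f)C_{ijk}R_{jk}\big)-\tfrac{2}{n-1}R|\mathring{Ric}|^2+\tfrac{2R}{n-1}|\mathring{Ric}|^2 + \cdots$, i.e. that the awkward cubic and Weyl terms get absorbed. Expanding $\nabla_i\big((1+f)C_{ijk}R_{jk}\big)=(1+f)\nabla_i(C_{ijk}R_{jk})+\nabla_i f\,C_{ijk}R_{jk}$, the task reduces to computing $\nabla_i f\,C_{ijk}R_{jk}$. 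Here I would substitute Lemma~\ref{L1} (which gives $(1+f)C_{ijk}$ — equivalently $(1+f)(\nabla_iR_{jk}-\nabla_jR_{ik})$ up to the scalar-curvature terms that vanish since $R$ is constant, so $(1+f)C_{ijk}=R_{ijkl}\nabla_l f+\tfrac{R}{n-1}(\nabla_i f g_{jk}-\nabla_j f g_{ik})-(\nabla_i f R_{jk}-\nabla_j f R_{ik})$) and contract against $\nabla_i f\,R_{jk}$. Using the first Bianchi-type symmetries, $R_{ijkl}\nabla_i f\,\nabla_l f\,R_{jk}$ reproduces a Weyl-times-Ricci term via \eqref{weyl}, while the algebraic pieces $(\nabla_i f R_{jk}-\nabla_j f R_{ik})\nabla_i f R_{jk}$ and $\tfrac{R}{n-1}(\cdots)$ contribute $|\nabla f|^2$- and $\mathring{Ric}$-type scalars; together with \eqref{IdRicHess} these should match the cubic term $\mathrm{tr}(\mathring{Ric}^3)$ and the $W$-term after using $(1+f)\mathring{Ric}=\mathring{Hess}f$ to convert $\nabla f\otimes\nabla f$ contractions into $\mathring{Ric}$ contractions.

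The main obstacle I anticipate is precisely this last conversion: the identity in Lemma~\ref{divRIC1} is genuinely pointwise (curvature only), whereas Lemma~\ref{lemmapsi} mixes in $f$ through the divergence term $\nabla_i((1+f)C_{ijk}R_{jk})$, so one must carefully track how the factor $(1+f)$ flows between the two sides and verify that all the $\nabla f$-dependent remainders cancel rather than leaving a spurious term. This cancellation hinges on using \eqref{CTW}, \eqref{IdRicHess}, Lemma~\ref{L1}, and the trace relation \eqref{eqtrace} in exactly the right combination, and on the skew-symmetry and trace-free properties of $C_{ijk}$ and $T_{ijk}$ to kill the terms proportional to $g_{jk}$. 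A secondary, purely computational nuisance is correctly handling the contraction $R_{ijkl}\nabla_i f\,R_{jk}\nabla_l f$: one must decompose $R_{ijkl}$ via \eqref{weyl} into its Weyl and Schouten parts, and then recognize that the Schouten contribution, after completing to traceless tensors, is exactly what supplies the $\tfrac{n}{n-2}\mathrm{tr}(\mathring{Ric}^3)$ and $\tfrac{R}{n-1}|\mathring{Ric}|^2$ terms. Once these two identifications are in place, substituting back into Lemma~\ref{divRIC1} and collecting terms yields the stated formula directly.
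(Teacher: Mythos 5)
Your reduction of the lemma to a ``bookkeeping identity'' is where the argument breaks down: the difference between Lemma~\ref{divRIC1} and twice the claimed formula is \emph{not} a pointwise algebraic identity in the curvature, and it cannot be closed by computing $\nabla_i f\,C_{ijk}R_{jk}$ alone. Doubling the statement of Lemma~\ref{lemmapsi} produces $4\nabla_i\big((1+f)C_{ijk}R_{jk}\big)=4(1+f)\nabla_i(C_{ijk}R_{jk})+4\nabla_i f\,C_{ijk}R_{jk}$, while Lemma~\ref{divRIC1} only contains $2(1+f)\nabla_i(C_{ijk}R_{jk})$; after matching the algebraic pieces you are left with an uncancelled second-order term $2(1+f)\nabla_i(C_{ijk}R_{jk})$. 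Expanding it via the Ricci identity gives $2(1+f)R_{jk}\Delta R_{jk}+\dots=(1+f)\Delta|Ric|^{2}-2(1+f)|\nabla Ric|^{2}+\dots$, which simply regenerates $div((1+f)\nabla|Ric|^{2})$ and collapses your computation back onto Lemma~\ref{divRIC1}: the argument is circular and yields no new relation. Indeed the two lemmas must be genuinely independent differential identities --- their comparison is exactly the key identity (\ref{KeyID}) on which Theorems~\ref{nDCPE} and~\ref{CPEokumura} rest --- so no amount of purely algebraic rewriting of one can produce the other.

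The missing ingredient is a second, independent divergence computation that injects the CPE equation at the level of \emph{first} derivatives of $Ric$. The paper introduces the auxiliary function $\psi=\nabla_{i}\big(\nabla_{j}f\,R_{ik}R_{kj}+R_{ijkl}\nabla_{l}f\,R_{jk}\big)$ and evaluates it in two ways: once by substituting Lemma~\ref{L1} and using the twice-contracted second Bianchi identity together with (\ref{eqtrace}) and (\ref{eq:tensorial}), giving (\ref{eqkey1}); and once by direct expansion using (\ref{cotton}), (\ref{Bianchi}) and the commutation formula (\ref{idRicci1}), giving (\ref{eqkey3}). Equating the two expressions for $\psi$ is what produces the stated formula; nothing in your proposal plays the role of this second computation. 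A further, more minor, point: your plan to use (\ref{IdRicHess}) to ``convert $\nabla f\otimes\nabla f$ contractions into $\mathring{Ric}$ contractions'' does not work as stated, since (\ref{IdRicHess}) identifies $(1+f)\mathring{Ric}$ with the traceless \emph{Hessian} of $f$, not with $\nabla f\otimes\nabla f$; terms such as $R_{ijkl}\nabla_{i}f\,R_{jk}\nabla_{l}f$ cannot be traded for $tr(\mathring{Ric}^{3})$ or $W_{ijkl}\mathring{R}_{ik}\mathring{R}_{jl}$ by that relation.
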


\begin{proof}
In what follows, we denote by $\psi$ the function
$$\psi=\nabla_{i}(\nabla_{j}fR_{ik}R_{kj}+R_{ijkl}\nabla_{l}fR_{jk}).$$
Now, by Lemma~\ref{L1} and since we already know that $M$ has constant scalar curvature, we have
\begin{eqnarray}\label{eqnablai}
\psi&=&\nabla_{i}\Big((1+f)C_{ijk}R_{jk}+|Ric|^{2}\nabla_{i}f+\frac{R}{n-1}R_{ij}\nabla_{j}f-\frac{R^{2}}{n-1}\nabla_{i}f\Big).
\end{eqnarray}
In the sequel, by using the twice contracted second Bianchi identity the Eq. (\ref{eqnablai}) becomes
\begin{eqnarray*}
\psi&=&\nabla_{i}((1+f)C_{ijk}R_{jk})+\nabla_{i}(|Ric|^{2}\nabla_{i}f)+\frac{R}{n-1}R_{ij}\nabla_{i}\nabla_{j}f-\frac{R^{2}}{n-1}\Delta f.
\end{eqnarray*}
Substituting (\ref{eqtrace}) and (\ref{eq:tensorial}) in the above expression we get
\begin{eqnarray}\label{eqa}
\psi&=&\nabla_{i}((1+f)C_{ijk}R_{jk})+\nabla_{i}(|Ric|^{2}\nabla_{i}f)+\frac{R}{n-1}(1+f)|\mathring{Ric}|^{2}\nonumber \\
&&-\frac{R^{2}}{n}\Delta f,
\end{eqnarray}
i.e.,
\begin{eqnarray}\label{eqkey1}
\psi &=&\nabla_{i}((1+f)C_{ijk}R_{jk})+\langle\nabla|Ric|^{2},\nabla f\rangle+\frac{R}{n-1}|\mathring{Ric}|^{2}.
\end{eqnarray}

On the other hand,
\begin{eqnarray*}
\psi&=&\nabla_{i}\nabla_{j}fR_{ik}R_{kj}+\nabla_{j}fR_{ik}\nabla_{i}R_{kj}+\nabla_{i}R_{ijkl}R_{jk}\nabla_{l}f\\
&&+R_{ijkl}\nabla_{i}R_{jk}\nabla_{l}f+R_{ijkl}R_{jk}\nabla_{i}\nabla_{l}f,
\end{eqnarray*}
which from (\ref{cotton}), (\ref{Bianchi}) and jointly with symmetries properties of the Riemann tensor we may deduce
\begin{eqnarray*}
\psi&=&\nabla_{j}fR_{ik}(C_{ijk}+\nabla_{j}R_{ik})+C_{klj}R_{jk}\nabla_{l}f+\frac{1}{2}R_{ijkl}(\nabla_{i}R_{jk}-\nabla_{j}R_{ik})\nabla_{l}f\\
&&+\nabla_{i}\nabla_{j}fR_{ik}R_{kj}+R_{ijkl}R_{jk}\nabla_{i}\nabla_{l}f\\
&=&(\nabla_{j}fR_{ik}-\nabla_{i}fR_{jk})C_{ijk}+\frac{1}{2}\langle\nabla f,\nabla|Ric|^{2}\rangle+\frac{1}{2}R_{ijkl}C_{ijk}\nabla_{l}f\\
&&+\nabla_{i}\nabla_{j}fR_{ik}R_{kj}+R_{ijkl}R_{jk}\nabla_{i}\nabla_{l}f.
\end{eqnarray*}
Using Lemma \ref{L1} again and Eq. (\ref{eq:tensorial}) we arrived at
\begin{eqnarray}\label{eqkey2}
\psi&=&\frac{1}{2}(\nabla_{j}fR_{ik}-\nabla_{i}fR_{jk})C_{ijk}+\frac{1}{2}\langle\nabla f,\nabla|Ric|^{2}\rangle+\frac{1}{2}(1+f)|C_{ijk}|^{2}\nonumber\\
&&+(1+f)(R_{ij}R_{ik}R_{jk}-R_{ijkl}R_{ik}R_{jl}).
\end{eqnarray}
To proceed, note that from (\ref{cotton}) we can infer
\begin{eqnarray}\label{normC}
(1+f)|C_{ijk}|^{2}&=&2(1+f)(|\nabla Ric|^{2}-\nabla_{i}R_{jk}\nabla_{j}R_{ik})\nonumber\\
&=&2(1+f)|\nabla Ric|^{2}-2\nabla_{j}((1+f)\nabla_{i}R_{jk}R_{ik})+2\nabla_{i}R_{jk}\nabla_{j}fR_{ik}\nonumber\\
&&+2(1+f)\nabla_{j}\nabla_{i}R_{jk}R_{ik}.
\end{eqnarray}
Thus, inserting (\ref{cotton}) and (\ref{idRicci1}) into (\ref{normC}) yields
\begin{eqnarray*}
\frac{1}{2}(1+f)|C_{ijk}|^{2}&-&(1+f)|\nabla Ric|^{2}+\nabla_{j}((1+f)\nabla_{i}R_{jk}R_{ik})=\\
&=&\frac{1}{2}C_{ijk}(\nabla_{j}fR_{ik}-\nabla_{i}fR_{jk})+\frac{1}{2}\langle\nabla f,\nabla|Ric|^{2}\rangle\\
&&+(1+f)(R_{ij}R_{ik}R_{jk}-R_{ijkl}R_{ik}R_{jl}).
\end{eqnarray*}
So, combining this expression with (\ref{eqkey2}), it is immediate to check that
\begin{eqnarray}\label{eqkey3}
\psi&=&(1+f)|C_{ijk}|^{2}-(1+f)|\nabla Ric|^{2}+\nabla_{j}((1+f)\nabla_{i}R_{jk}R_{ik})\nonumber\\
&=&(1+f)|C_{ijk}|^{2}-(1+f)|\nabla Ric|^{2}+\nabla_{j}((1+f)C_{ijk}R_{ik})\nonumber\\
&&+\frac{1}{2}div((1+f)\nabla|Ric|^{2}).
\end{eqnarray}
Therefore, the desired divergent formula follows from (\ref{eqkey1}) and (\ref{eqkey3}).
\end{proof}

\begin{remark}\label{ricparallel}
Notice that, if we consider a CPE metric with parallel Ricci curvature, follows by Kato's inequality,
$$|\nabla|Ric||\leq|\nabla Ric|,$$
that $|Ric|$ is constant on $M^{n}.$ Consequently, after to take the integral in Eq. (\ref{eqa}), we may deduce that our manifold is Einstein and therefore it must be isometric to a round sphere. This result is already known and has been proven with a different technique by Chang, Hwang and Yun in \cite[Theorem 1.1]{CHY12}.
\end{remark}

Before we present our next lemma let us highlight that, integrating (\ref{eqa}) over $M$ and applying the divergence formula,  we may obtain a important integral identity which will be essential in order to get the next results, more precisely we have that
\begin{equation}\label{keyequality}
\int_{M}(1+f)R|\mathring{Ric}|^{2}dM_{g}=0.
\end{equation}

Now, as a consequence of this Bochner type formulas we can deduce the following integral formulas for CPE metrics.

\begin{lemma}\label{intnormRIC}
Let $(M^{n},g,f)$ be a CPE metric. Then we have:
\begin{eqnarray*}
\int_{M}|\mathring{Ric}|^{2}|\nabla f|^{2}dM_{g}&=&\int_{M}(1+f)^{2}|\nabla Ric|^{2}dM_{g}+\frac{n-3}{2(n-1)}\int_{M}(1+f)^{2}|C_{ijk}|^{2}dM_{g}\\
&&+\frac{2}{n-1}\int_{M}(1+f)^{2}R|\mathring{Ric}|^{2}dM_{g}-\frac{n-2}{n-1}\int_{M}(1+f)C_{ijk}W_{ijkl}\nabla_{l}fdM_{g}\\
&&+\int_{M}(1+f)^{2}\left(\frac{n}{n-2}tr(\mathring{Ric}^{3})-W_{ijkl}\mathring{R}_{ik}\mathring{R}_{jl}\right)dM_{g}.
\end{eqnarray*}
\end{lemma}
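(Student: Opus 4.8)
The plan is to integrate the divergence identity of Lemma~\ref{divRIC1} against the weight $(1+f)$. Multiplying that identity by $(1+f)$ and integrating over the closed manifold $M$, the left-hand side becomes, after integration by parts, $-\int_M(1+f)\langle\nabla f,\nabla|Ric|^2\rangle\,dM_g$, while the divergence term $2(1+f)^2\nabla_i(C_{ijk}R_{jk})$ contributes $-4\int_M(1+f)C_{ijk}R_{jk}\nabla_i f\,dM_g$ (from $\nabla_i(1+f)^2=2(1+f)\nabla_i f$). Bringing the two copies of $\int_M(1+f)\langle\nabla f,\nabla|Ric|^2\rangle\,dM_g$ to one side and dividing by $-2$ yields a first identity expressing $\int_M(1+f)\langle\nabla f,\nabla|Ric|^2\rangle\,dM_g$ through $\int_M(1+f)^2|\nabla Ric|^2\,dM_g$, $\int_M(1+f)^2|C_{ijk}|^2\,dM_g$, $\int_M(1+f)^2R|\mathring{Ric}|^2\,dM_g$, $\int_M(1+f)^2\big(\tfrac{n}{n-2}tr(\mathring{Ric}^3)-W_{ijkl}\mathring{R}_{ik}\mathring{R}_{jl}\big)\,dM_g$ and $\int_M(1+f)C_{ijk}R_{jk}\nabla_i f\,dM_g$.

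The proof is then finished by evaluating the last two quantities independently. For the first, since $R$ is constant we have $\nabla|Ric|^2=\nabla|\mathring{Ric}|^2$, and one more integration by parts together with the trace equation \eqref{eqtrace} in the form $\Delta f=-\tfrac{R}{n-1}f$ gives
\[
\int_M(1+f)\langle\nabla f,\nabla|Ric|^2\rangle\,dM_g=\frac{R}{n-1}\int_M f(1+f)|\mathring{Ric}|^2\,dM_g-\int_M|\mathring{Ric}|^2|\nabla f|^2\,dM_g.
\]
Writing $f(1+f)=(1+f)^2-(1+f)$ and using \eqref{keyequality} (equivalently $\int_M(1+f)|\mathring{Ric}|^2\,dM_g=0$, valid since $R>0$) collapses the first integral on the right, so this equals $\tfrac{1}{n-1}\int_M(1+f)^2R|\mathring{Ric}|^2\,dM_g-\int_M|\mathring{Ric}|^2|\nabla f|^2\,dM_g$. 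For the second, I would use the tensor $T_{ijk}$ from \eqref{TensorT}: contracting it with the Cotton tensor and noting that $C_{ijk}$ is skew in its first two indices and trace-free in every pair (so the $g$-terms of $T_{ijk}$ drop out) yields the pointwise identity $C_{ijk}T_{ijk}=-\tfrac{2(n-1)}{n-2}\,C_{ijk}R_{jk}\nabla_i f$; combined with \eqref{CTW} written as $T_{ijk}=(1+f)C_{ijk}-W_{ijkl}\nabla_l f$ this gives
\[
(1+f)C_{ijk}R_{jk}\nabla_i f=-\frac{n-2}{2(n-1)}\Big[(1+f)^2|C_{ijk}|^2-(1+f)C_{ijk}W_{ijkl}\nabla_l f\Big].
\]

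Substituting these two evaluations into the first identity and collecting terms produces the claim: the two $R|\mathring{Ric}|^2$ contributions add up to the coefficient $\tfrac{2}{n-1}$, the two $|C_{ijk}|^2$ contributions combine via $-\tfrac{1}{2}+\tfrac{n-2}{n-1}=\tfrac{n-3}{2(n-1)}$, and the Weyl--Cotton term appears with coefficient $-\tfrac{n-2}{n-1}$. I expect the only delicate points to be keeping track of the factors of $2$ generated when differentiating $(1+f)^2$ in the integrations by parts and, above all, verifying the algebraic identity $C_{ijk}T_{ijk}=-\tfrac{2(n-1)}{n-2}\,C_{ijk}R_{jk}\nabla_i f$, which relies on the exact form of $T_{ijk}$ in \eqref{TensorT} and on the symmetry/trace properties of $C_{ijk}$. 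Finally, all integrations by parts involve only smooth tensors multiplied by nonnegative powers of $(1+f)$, so no difficulty arises near $\{f=-1\}$ and Proposition~\ref{propLS} is not needed here.
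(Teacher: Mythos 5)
Your proposal is correct and follows essentially the same route as the paper: multiply the identity of Lemma~\ref{divRIC1} by $(1+f)$, integrate by parts, convert the term $\int_M(1+f)C_{ijk}R_{jk}\nabla_i f\,dM_g$ via the skew-symmetry/trace-freeness of $C_{ijk}$, the tensor $T_{ijk}$ and \eqref{CTW}, and evaluate the remaining boundary-free term using $\Delta f=-\tfrac{R}{n-1}f$ together with \eqref{keyequality}. The paper packages that last step as $\Delta(1+f)^2=-2(1+f)\tfrac{Rf}{n-1}+2|\nabla f|^2$ rather than integrating $\operatorname{div}((1+f)\nabla f)$ directly, but the computation and all coefficients agree with yours.
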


\begin{proof}
First of all, we multiply the equation obtained in Lemma~\ref{divRIC1} by $(1+f).$ Then, integrating by parts over $M$ we get
\begin{eqnarray}\label{auxLint}
-\int_{M}\langle \nabla(1+f)^{2}, \nabla|Ric|\rangle dM_{g}&=&2\int_{M}(1+f)^{2}|\nabla Ric|^{2}dM_{g}-\int_{M}(1+f)^{2}|C_{ijk}|^{2}dM_{g}\nonumber\\
&&-4\int_{M}(1+f)C_{ijk}\nabla_{i}fR_{jk}dM_{g}+\frac{2}{n-1}\int_{M}R(1+f)^{2}|\mathring{Ric}|^{2}dM_{g}\nonumber\\
&&+2\int_{M}(1+f)^{2}\left(\frac{n}{n-2}tr(\mathring{Ric}^{3})-W_{ijkl}\mathring{R}_{ik}\mathring{R}_{jl}\right)dM_{g}.
\end{eqnarray}
Next, since the Cotton tensor satisfies
\begin{eqnarray}\label{CID}
C_{ijk}\nabla_{i}fR_{jk}&=&\frac{1}{2}C_{ijk}(\nabla_{i}fR_{jk}-\nabla_{j}fR_{ik})\nonumber\\
&=&-\frac{n-2}{2(n-1)}C_{ijk}T_{ijk},
\end{eqnarray}
and taking into account Eq. (\ref{CTW}) together with fact that $M$ has constant scalar curvature, it is not difficult to verify that (\ref{auxLint}) becomes
\begin{eqnarray*}
\int_{M}|\mathring{Ric}|^{2}\Delta(1+f)^{2}dM_{g}&=&2\int_{M}(1+f)^{2}|\nabla Ric|^{2}dM_{g}-\int_{M}(1+f)^{2}|C_{ijk}|^{2}dM_{g}\\
&&+\frac{2n-4}{n-1}\int_{M}(1+f)C_{ijk}((1+f)C_{ijk}-W_{ijkl}\nabla_{l}f)dM_{g}\\
&&+\frac{2}{n-1}\int_{M}(1+f)^{2}R|\mathring{Ric}|^{2}dM_{g}\\
&&+2\int_{M}(1+f)^{2}\left(\frac{n}{n-2}tr(\mathring{Ric}^{3})-W_{ijkl}\mathring{R}_{ik}\mathring{R}_{jl}\right)dM_{g},
\end{eqnarray*}
i.e.,

\begin{eqnarray}\label{auxnormRIC}
\int_{M}|\mathring{Ric}|^{2}\Delta(1+f)^{2}dM_{g}&=&2\int_{M}(1+f)^{2}|\nabla Ric|^{2}dM_{g}+\frac{n-3}{n-1}\int_{M}(1+f)^{2}|C_{ijk}|^{2}dM_{g}\nonumber\\
&&-\frac{2n-4}{n-1}\int_{M}(1+f)C_{ijk}W_{ijkl}\nabla_{l}fdM_{g}\nonumber\\
&&+2\int_{M}(1+f)^{2}\left(\frac{n}{n-2}tr(\mathring{Ric}^{3})-W_{ijkl}\mathring{R}_{ik}\mathring{R}_{jl}\right)dM_{g}\nonumber\\
&&+\frac{2}{n-1}\int_{M}(1+f)^{2}R|\mathring{Ric}|^{2}dM_{g}.
\end{eqnarray}

Now, notice that
\begin{equation}\label{delaf}
\Delta(1+f)^{2}=-2(1+f)\frac{Rf}{n-1}+2|\nabla f|^{2}.
\end{equation}
This data substituted in (\ref{auxnormRIC}) jointly with (\ref{keyequality}) gives the requested result.
\end{proof}

In the following, we will proceed in a similar way to the previous lemma, now using Lemma~\ref{lemmapsi}, to deduce another formula for $\int_{M}|\mathring{Ric}|^{2}|\nabla f|^{2}dM_{g}.$

\begin{lemma}\label{intnormRIC1}
Let $(M^{n},g,f)$ be a CPE metric. Then we have:
\begin{eqnarray*}
\int_{M}|\mathring{Ric}|^{2}|\nabla f|^{2}dM_{g}&=&\frac{1}{n-1}\int_{M}(1+f)^{2}R|\mathring{Ric}|^{2}dM_{g}-\frac{2}{3(n-1)}\int_{M}(1+f)^{2}|C_{ijk}|^{2}dM_{g}\\
&&+\frac{2}{3}\int_{M}(1+f)^{2}|\nabla Ric|^{2}dM_{g}-\frac{2(n-2)}{3(n-1)}\int_{M}(1+f)C_{ijk}W_{ijkl}\nabla_{l} fdM_{g}.
\end{eqnarray*}
\end{lemma}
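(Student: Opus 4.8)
The plan is to rerun the integration-by-parts scheme used in the proof of Lemma~\ref{intnormRIC}, but now feeding in the divergence formula of Lemma~\ref{lemmapsi} rather than that of Lemma~\ref{divRIC1}. Concretely, multiply the identity of Lemma~\ref{lemmapsi} by $(1+f)$ and integrate over $M$. On the left-hand side the divergence theorem yields $-\frac{1}{2}\int_{M}(1+f)\langle\nabla f,\nabla|Ric|^{2}\rangle\,dM_{g}$. On the right-hand side the term $\frac{R}{n-1}(1+f)|\mathring{Ric}|^{2}$ contributes nothing: since $R$ is constant it equals $\frac{1}{n-1}\int_{M}(1+f)R|\mathring{Ric}|^{2}$, which vanishes by \eqref{keyequality}. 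The divergence term $2(1+f)\nabla_{i}\big((1+f)C_{ijk}R_{jk}\big)$ integrates by parts to $-2\int_{M}(1+f)C_{ijk}R_{jk}\nabla_{i}f\,dM_{g}$; applying \eqref{CID} and then \eqref{CTW} written as $T_{ijk}=(1+f)C_{ijk}-W_{ijkl}\nabla_{l}f$, this becomes
\[
\frac{n-2}{n-1}\int_{M}\Big((1+f)^{2}|C_{ijk}|^{2}-(1+f)C_{ijk}W_{ijkl}\nabla_{l}f\Big)\,dM_{g}.
\]

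Collecting the two occurrences of $\int_{M}(1+f)\langle\nabla f,\nabla|Ric|^{2}\rangle\,dM_{g}$ (coefficient $-\frac{1}{2}$ coming from the left side and $+1$ from the right side, so $-\frac{3}{2}$ in total) and simplifying the Cotton coefficient via $-1+\frac{n-2}{n-1}=-\frac{1}{n-1}$, one arrives at
\begin{align*}
-\frac{3}{2}\int_{M}(1+f)\langle\nabla f,\nabla|Ric|^{2}\rangle\,dM_{g}&=-\frac{1}{n-1}\int_{M}(1+f)^{2}|C_{ijk}|^{2}\,dM_{g}+\int_{M}(1+f)^{2}|\nabla Ric|^{2}\,dM_{g}\\
&\quad-\frac{n-2}{n-1}\int_{M}(1+f)C_{ijk}W_{ijkl}\nabla_{l}f\,dM_{g}.
\end{align*}
Next I would evaluate the same integral a second, independent way: since $R$ is constant one has $\nabla|Ric|^{2}=\nabla|\mathring{Ric}|^{2}$, so integration by parts together with the trace equation \eqref{eqtrace} (that is, $\Delta f=-\frac{R}{n-1}f$) and once more \eqref{keyequality} gives
\[
\int_{M}(1+f)\langle\nabla f,\nabla|Ric|^{2}\rangle\,dM_{g}=-\int_{M}|\mathring{Ric}|^{2}|\nabla f|^{2}\,dM_{g}+\frac{1}{n-1}\int_{M}(1+f)^{2}R|\mathring{Ric}|^{2}\,dM_{g}.
\]

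Substituting this identity into the previous display and solving for $\int_{M}|\mathring{Ric}|^{2}|\nabla f|^{2}\,dM_{g}$ yields precisely the asserted formula. There is no conceptual obstacle: the whole argument is built from Lemma~\ref{lemmapsi}, the algebraic tensor relations \eqref{CID} and \eqref{CTW}, the trace identity \eqref{eqtrace}, and the integral identity \eqref{keyequality}, exactly as in the proof of Lemma~\ref{intnormRIC}. The only delicate point is the constant bookkeeping — in particular merging the $-\frac{1}{2}$ and $+1$ coefficients of the $\langle\nabla f,\nabla|Ric|^{2}\rangle$ terms into $-\frac{3}{2}$, the simplification $-1+\frac{n-2}{n-1}=-\frac{1}{n-1}$, and the final division by $\frac{3}{2}$ that produces the $\frac{2}{3}$-factors appearing in the statement.
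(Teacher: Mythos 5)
Your proposal is correct and follows essentially the same route as the paper: multiply the identity of Lemma~\ref{lemmapsi} by $(1+f)$, integrate, convert the Cotton term via \eqref{CID} and \eqref{CTW} (producing the coefficient $-1+\tfrac{n-2}{n-1}=-\tfrac{1}{n-1}$), and then evaluate the remaining $\langle\nabla f,\nabla|Ric|^{2}\rangle$ integral using the trace equation and \eqref{keyequality}. The only cosmetic difference is that the paper packages this last step as $\tfrac{3}{4}\int_{M}|\mathring{Ric}|^{2}\Delta(1+f)^{2}\,dM_{g}$ together with \eqref{delaf}, whereas you integrate $\operatorname{div}((1+f)\nabla f)$ directly; the two computations are identical, and your constants all check out.
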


\begin{proof}
To begin with, we use the Lemma~\ref{lemmapsi} and Eq. (\ref{keyequality}) to infer
\begin{eqnarray*}
-\frac{3}{4}\int_{M}\langle \nabla(1+f)^{2}, \nabla|Ric|\rangle dM_{g}&=&\int_{M}(1+f)^{2}|\nabla Ric|^{2}dM_{g}-\int_{M}(1+f)^{2}|C_{ijk}|^{2}dM_{g}\\
&&+2\int_{M}(1+f)\nabla_{i}((1+f)C_{ijk}R_{jk})dM_{g}\\
&=&\int_{M}(1+f)^{2}|\nabla Ric|^{2}dM_{g}-\int_{M}(1+f)^{2}|C_{ijk}|^{2}dM_{g}\\
&&-2\int_{M}(1+f)C_{ijk}\nabla_{i}fR_{jk}dM_{g}.
\end{eqnarray*}
Thus, with a straightforward computation using (\ref{CID}) and  (\ref{CTW}), we may apply integration by parts to deduce
\begin{eqnarray*}
\frac{3}{4}\int_{M}|\mathring{Ric}|^{2}\Delta(1+f)^{2}dM_{g}&=&-\frac{1}{n-1}\int_{M}(1+f)^{2}|C_{ijk}|^{2}dM_{g}+\int_{M}(1+f)^{2}|\nabla Ric|^{2}dM_{g}\\
&&-\frac{n-2}{n-1}\int_{M}(1+f)C_{ijk}W_{ijkl}\nabla_{l}fdM_{g}.
\end{eqnarray*}
To conclude the proof, it is sufficient to substitute (\ref{delaf}) in the above expression and then to use (\ref{keyequality}).
\end{proof}

With these considerations in mind, we are in position to prove our Theorems \ref{nDCPE} and \ref{CPEokumura}.

\subsection{Proof of Theorem~\ref{nDCPE}}
\begin{proof}
We compare the expressions obtained in Lemma~\ref{intnormRIC} and Lemma~\ref{intnormRIC1} to deduce
\begin{eqnarray}\label{KeyID}
0&=&\int_{M}(1+f)^{2}|\nabla Ric|^{2}dM_{g}+\frac{3n-5}{2(n-1)}\int_{M}(1+f)^{2}|C|^{2}dM_{g}\nonumber\\
&&+3\int_{M}(1+f)^{2}\left(\frac{n}{n-2}tr(\mathring{Ric}^{3})-W_{ijkl}\mathring{R}_{ik}\mathring{R}_{jl}\right)dM_{g}\nonumber\\
&&-\frac{n-2}{n-1}\int_{M}(1+f)C_{ijk}W_{ijkl}\nabla_{l}fdM_{g}\nonumber\\
&&+\frac{3}{n-1}\int_{M} (1+f)^{2}R|\mathring{Ric}|^{2}dM_{g}.
\end{eqnarray}
Hence, taking into account our assumption that $M^{n}$ has zero radial Weyl curvature, it is easy to verify that (\ref{KeyID}) becomes
\begin{eqnarray}\label{KeyID2}
0&=&\int_{M}(1+f)^{2}|\nabla Ric|^{2}dM_{g}+\frac{3n-5}{2(n-1)}\int_{M}(1+f)^{2}|C|^{2}dM_{g}\nonumber\\
&&+3\int_{M}(1+f)^{2}\left(\frac{n}{n-2}tr(\mathring{Ric}^{3})-W_{ijkl}\mathring{R}_{ik}\mathring{R}_{jl}+\frac{1}{n-1}R|\mathring{Ric}|^{2}\right)dM_{g}.
\end{eqnarray}

In order to proceed, we need the following pointwise estimate which is satisfied in general for every metric with nonnegative sectional curvature, namely,
\begin{equation}\label{INEricci}
R_{ij}R_{ik}R_{jk}\geq R_{ijkl}R_{ik}R_{jl}.
\end{equation}
In fact, let $\{e_{i}\}_{i=1}^{n}$ be the eigenvectors of $Ric$  and let $\lambda_{i}$ be the corresponding eigenvalues. Then, by direct computation, we have that
\begin{eqnarray*}
R_{ij}R_{ik}R_{jk}-R_{ijkl}R_{ik}R_{jl}&=&\sum_{i}\lambda_{i}^{3}-\sum_{i,j}R_{ijij}\lambda_{i}\lambda_{j}\\
&=&\sum_{i,j}\lambda_{i}^{2}R_{ijij}-\sum_{i,j}R_{ijij}\lambda_{i}\lambda_{j}\\
&=&\frac{1}{2}\sum_{i,j}(\lambda_{i}-\lambda_{j})^{2}R_{ijij},
\end{eqnarray*}
i.e.,
\begin{eqnarray*}
R_{ij}R_{ik}R_{jk}=R_{ijkl}R_{ik}R_{jl}+\frac{1}{2}\sum_{i,j}(\lambda_{i}-\lambda_{j})^{2}K_{ij},
\end{eqnarray*}
where $K_{ij}$ is the sectional curvature defined by the two-plane spanned by $e_{i}$ and $e_{j}.$ Therefore, as we are supposing the metric with nonnegative sectional curvature, clearly we get the desired inequality.

So, from (\ref{auxidRM}) and (\ref{INEricci}), we may deduce that each term in (\ref{KeyID2}) must be nonnegative. In  particular, we conclude, jointly with Proposition~\ref{propLS}, that $M^{n}$ has parallel Ricci curvature. Now the result follows from Remark~\ref{ricparallel} (see also \cite[Theorem 1.1]{CHY12}).
\end{proof}

\subsection{Proof of Theorem~\ref{CPEokumura}}

\begin{proof}
Firstly, we recall the classical Okumura's inequality which, in few words, it says that the following inequality is true
$$tr(\mathring{Ric}^{3})\geq -\frac{n-2}{\sqrt{n(n-1)}}|\mathring{Ric}|^{3},$$
for more details see \cite[Lemma 2.1]{Okumura}. Consequently, we get
\begin{equation}\label{OKaux}
\frac{n}{n-2}tr(\mathring{Ric}^{3})+\frac{1}{n-1}R|\mathring{Ric}|^{2}\geq\frac{n}{\sqrt{n(n-1)}}\left(\frac{R}{\sqrt{n(n-1)}}-|\mathring{Ric}|\right)|\mathring{Ric}|^{2}.
\end{equation}

Otherwise, considering the $n$-dimensional case with $n\geq4,$ we first deduce a key integral identity to achieve our goals. More precisely, using (\ref{eq1:CPE}) and after a simple integration by parts, we may deduce
\begin{eqnarray*}
\int_{M}(1+f)^{2}W_{ijkl}R_{ik}R_{jl}dM_{g}&=&\int_{M}(1+f)W_{ijkl}\nabla_{i}\nabla_{k}fR_{jl}dM_{g}\\
&=&-\int_{M}\nabla_{i}fW_{ijkl}\nabla_{k}fR_{jl}dM_{g}\\
&&-\int_{M}(1+f)\nabla_{i}W_{ijkl}\nabla_{k}fR_{jl}dM_{g}\\
&&-\int_{M}(1+f)W_{ijkl}\nabla_{k}f\nabla_{i}R_{jl}dM_{g}.
\end{eqnarray*}
Consequently, from (\ref{cottonwyel}) and the fact that $M^{n}$ has zero radial Weyl curvature, we obtain
\begin{eqnarray*}
\int_{M}(1+f)^{2}W_{ijkl}R_{ik}R_{jl}dM_{g}&=&\frac{n-3}{n-2}\int_{M}(1+f)C_{ijk}\nabla_{j}fR_{ik}dM_{g},
\end{eqnarray*}
where we change some indices for simplicity. Hence, as the Cotton tensor is skew-symmetric in the first two indices, see (\ref{CID}), we immediately have
\begin{eqnarray}\label{keyINTW}
\int_{M}(1+f)^{2}W_{ijkl}R_{ik}R_{jl}dM_{g}&=&\frac{n-3}{2(n-1)}\int_{M}(1+f)C_{ijk}T_{ijk}dM_{g}.
\end{eqnarray}
Note that, as $W_{ijkl}=0$ when $n=3,$ the above identity is clearly true for the three-dimensional case.
Now, we substitute (\ref{keyINTW}) in (\ref{KeyID}) to arrive at
\begin{eqnarray}\label{intweylradial}
0&=&\int_{M}(1+f)^{2}|\nabla Ric|^{2}dM_{g}+\frac{2}{n-1}\int_{M}(1+f)^{2}|C_{ijk}|^{2}dM_{g}\nonumber\\
&&+\frac{3n}{n-2}\int_{M}(1+f)^{2}tr(\mathring{Ric}^{3})dM_{g}+\frac{3}{n-1}\int_{M} (1+f)^{2}R|\mathring{Ric}|^{2}dM_{g},
\end{eqnarray}
which combined with (\ref{OKaux}) and our pinching assumption gives
\begin{eqnarray*}
0&\geq&\int_{M}(1+f)^{2}|\nabla Ric|^{2}dM_{g}+\frac{2}{n-1}\int_{M}(1+f)^{2}|C_{ijk}|^{2}dM_{g}\\
&&+\frac{3n}{\sqrt{n(n-1)}}\int_{M}(1+f)^{2}\left(\frac{R}{\sqrt{n(n-1)}}-|\mathring{Ric}|\right)|\mathring{Ric}|^{2}dM_{g}\geq0
\end{eqnarray*}
(we observe that the Equation (\ref{intweylradial}) must be true for all CPE metric satisfying zero radial Weyl condiction).
Therefore, by the first integral in the above expression jointly with proposition~\ref{propLS}, we obtain that $M^{n}$ has parallel Ricci curvature. Again the result follows from Remark~\ref{ricparallel}.
\end{proof}

\section{Integral Pinching Condition for CPE metric}

In this section we will investigate CPE metrics satisfying a $L^{n/2}$-pinching condition. In this sense, we will be able to prove Theorem~\ref{CPEyamabe} mentioned in Section~\ref{intro}. To do so, we shall present the following estimate for an arbitrary $n$-dimensional Riemannian manifold.  Such estimate appear, for instance, in \cite[Lemma 2.5]{FU} and here, we present its prove for convenience of the readers.

\begin{lemma}\label{Keyestimte}
On every $n$-dimensional Riemannian manifold the following estimate holds
\begin{eqnarray*}
\left|\frac{n}{n-2}\mathring{R}_{ij}\mathring{R}_{jk}\mathring{R}_{ik}-W_{ijkl}\mathring{R}_{ik}\mathring{R}_{jl}\right|\leq\sqrt{\frac{n-2}{2(n-1)}}\left(|W|^{2}+\frac{2n}{n-2}|\mathring{Ric}|^{2}\right)^{1/2}|\mathring{Ric}|^{2}.
\end{eqnarray*}
\end{lemma}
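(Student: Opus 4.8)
The plan is to prove the pointwise algebraic estimate by treating the expression as a trilinear form evaluated on the traceless Ricci tensor and controlling it by the norm of the combined curvature operator $W + \frac{\sqrt{n}}{\sqrt{2}(n-2)}\mathring{Ric}\varowedge g$ (or rather by $|W|$ and $|\mathring{Ric}|$ separately, since the statement is phrased that way). The key observation is that both terms $\frac{n}{n-2}\mathring{R}_{ij}\mathring{R}_{jk}\mathring{R}_{ik}$ and $W_{ijkl}\mathring{R}_{ik}\mathring{R}_{jl}$ are of the schematic form $(\text{curvature-type }4\text{-tensor})\cdot \mathring{Ric}\cdot\mathring{Ric}$: indeed $\frac{n}{n-2}\mathring{R}_{ij}\mathring{R}_{jk}\mathring{R}_{ik} = \frac{n}{2(n-2)}(\mathring{Ric}\varowedge g)_{ijkl}\mathring{R}_{ik}\mathring{R}_{jl}$ up to trace corrections that vanish because $\mathring{Ric}$ is trace-free, so the left-hand side equals, up to a sign, $\big(W + \frac{n}{2(n-2)}\mathring{Ric}\varowedge g\big)_{ijkl}\mathring{R}_{ik}\mathring{R}_{jl}$ or a closely related contraction.

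First I would record the identity expressing $\mathring{R}_{ij}\mathring{R}_{jk}\mathring{R}_{ik}$ as a Kulkarni--Nomizu contraction: since $\mathrm{tr}(\mathring{Ric})=0$, one has $(\mathring{Ric}\varowedge g)_{ijkl}\mathring{R}_{ik}\mathring{R}_{jl} = 2\,\mathrm{tr}(\mathring{Ric}^3) + (\text{terms with }g\text{ contracted against }\mathring{Ric}) = 2\,\mathring{R}_{ij}\mathring{R}_{jk}\mathring{R}_{ik}$, the leftover terms dropping out by trace-freeness. Hence the quantity in absolute values is a single contraction $Z_{ijkl}\mathring{R}_{ik}\mathring{R}_{jl}$ where $Z = W + \frac{\sqrt{n}}{\sqrt{2}(n-2)}\,\mathring{Ric}\varowedge g$ after matching constants (this is exactly the tensor appearing in the pinching hypothesis \eqref{intP}, which is why the lemma is stated with that particular coefficient). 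Then I would apply the Cauchy--Schwarz-type estimate for an algebraic curvature tensor acting on a symmetric $2$-tensor: $|Z_{ijkl}\mathring{R}_{ik}\mathring{R}_{jl}| \le c(n)\,|Z|\,|\mathring{Ric}|^2$, where the dimensional constant $c(n)$ comes from the standard bound relating the norm of a $(0,4)$-curvature-type tensor contracted twice against a $(0,2)$-tensor to the product of norms — for Weyl-type symmetries this constant is $\sqrt{\frac{n-2}{2(n-1)}}$, which matches the claimed right-hand side.

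The remaining step is to expand $|Z|^2 = \big|W + \frac{\sqrt{n}}{\sqrt{2}(n-2)}\mathring{Ric}\varowedge g\big|^2$ and show it equals (or is bounded by) $|W|^2 + \frac{2n}{n-2}|\mathring{Ric}|^2$. This uses two facts: the cross term $\langle W, \mathring{Ric}\varowedge g\rangle$ vanishes because $W$ is totally trace-free while $\mathring{Ric}\varowedge g$ is a contraction against the metric in one slot — so all contractions of $W$ against $g$ die; and the norm $|\mathring{Ric}\varowedge g|^2 = 4(n-2)|\mathring{Ric}|^2$ (again the mixed $g$-$\mathring{Ric}$ terms vanishing by trace-freeness), so $\frac{n}{2(n-2)^2}|\mathring{Ric}\varowedge g|^2 = \frac{2n}{n-2}|\mathring{Ric}|^2$. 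Combining the pieces yields the stated inequality.

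The main obstacle I anticipate is purely bookkeeping: getting every dimensional constant exactly right — in particular verifying that the coefficient $\sqrt{\frac{n}{2}}/(n-2)$ in front of $\mathring{Ric}\varowedge g$ is precisely the one that makes $\frac{n}{n-2}\mathring{R}_{ij}\mathring{R}_{jk}\mathring{R}_{ik}$ appear (rather than a multiple of it) when $Z_{ijkl}$ is contracted twice against $\mathring{Ric}$, and pinning down the sharp constant $\sqrt{\frac{n-2}{2(n-1)}}$ in the curvature-tensor Cauchy--Schwarz bound. Since the excerpt itself flags that this estimate already appears in \cite[Lemma 2.5]{FU}, I would cross-check the constants against that reference and present the computation cleanly rather than re-deriving the optimal constant from scratch. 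No deep idea is needed; the content is the algebraic reorganization that reveals the left-hand side as a single double contraction of the hypothesis tensor $Z$ against $\mathring{Ric}$.
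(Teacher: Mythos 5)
Your algebraic setup agrees with the paper's: its first step is precisely the identity
\[
\frac{n}{n-2}\mathring{R}_{ij}\mathring{R}_{jk}\mathring{R}_{ik}-W_{ijkl}\mathring{R}_{ik}\mathring{R}_{jl}=-\frac{1}{4}\Big(W+\frac{n}{2(n-2)}\mathring{Ric}\varowedge g\Big)_{ijkl}(\mathring{Ric}\varowedge\mathring{Ric})_{ijkl},
\]
and your observations that $\langle W,\mathring{Ric}\varowedge g\rangle=0$ and $|\mathring{Ric}\varowedge g|^{2}=4(n-2)|\mathring{Ric}|^{2}$ are correct. The genuine gap is the step you delegate to a ``standard Cauchy--Schwarz bound with constant $\sqrt{(n-2)/(2(n-1))}$.'' No such off-the-shelf bound applies to $Z=W+\frac{n}{2(n-2)}\mathring{Ric}\varowedge g$: the naive Cauchy--Schwarz gives $|Z_{ijkl}\mathring{R}_{ik}\mathring{R}_{jl}|\leq|Z|\,|\mathring{Ric}|^{2}$ with constant $1$, while the refined constant $\sqrt{(n-2)/(2(n-1))}$ (Huisken's estimate) is established only for \emph{totally trace-free} curvature tensors, which $Z$ is not, since $\mathring{Ric}\varowedge g$ has a nonvanishing Ricci contraction. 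Note also that identifying $Z$ with the tensor $W+\frac{\sqrt{n}}{\sqrt{2}(n-2)}\mathring{Ric}\varowedge g$ of (\ref{intP}) ``after matching constants'' cannot hold as an exact identity: the algebra forces the coefficient $\frac{n}{2(n-2)}$, and a direct Cauchy--Schwarz with that coefficient would place $|W|^{2}+\frac{n^{2}}{n-2}|\mathring{Ric}|^{2}$ inside the square root rather than $|W|^{2}+\frac{2n}{n-2}|\mathring{Ric}|^{2}$.

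The paper closes exactly this gap: it decomposes $\mathring{Ric}\varowedge\mathring{Ric}=T+V+U$ orthogonally ($T$ totally trace-free, $V$ the traceless-Ricci part, $U$ the scalar part), notes that $W$ pairs only with $T$ while $\mathring{Ric}\varowedge g$ pairs only with $V$, and then rescales the two channels oppositely, rewriting the pairing as $\big\langle W+\sqrt{2/n}\,\tfrac{n}{2(n-2)}\mathring{Ric}\varowedge g,\;T+\sqrt{n/2}\,V\big\rangle$ before applying Cauchy--Schwarz. That rescaling is what converts $\frac{n}{2(n-2)}$ into $\frac{\sqrt{n}}{\sqrt{2}(n-2)}$, and the computation $|T|^{2}+\frac{n}{2}|V|^{2}=\frac{8(n-2)}{n-1}|\mathring{Ric}|^{4}$ is what produces the factor $\sqrt{(n-2)/(2(n-1))}$. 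Estimating the two terms separately (Huisken's bound for the $W$ term, Okumura for the cubic term) only yields the strictly weaker bound $\sqrt{\tfrac{n-2}{2(n-1)}}|W||\mathring{Ric}|^{2}+\sqrt{\tfrac{n}{n-1}}|\mathring{Ric}|^{3}$, so the joint treatment is genuinely needed. Citing \cite[Lemma 2.5]{FU}, as you suggest, is legitimate --- the paper does so and reproves the estimate only for convenience --- but as a self-contained argument your proposal is missing the decomposition-and-rescaling step that carries the entire quantitative content of the lemma.
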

\begin{proof}
We follow \cite[Proposition 2.1]{Catino16}. Firstly, after some computation, we can deduce
\begin{eqnarray}\label{auxid}
\frac{n}{n-2}\mathring{R}_{ij}\mathring{R}_{jk}\mathring{R}_{ik}-W_{ijkl}\mathring{R}_{ik}\mathring{R}_{jl}&=&-\frac{1}{4}W_{ijkl}(\mathring{Ric}\varowedge\mathring{Ric})_{ijkl}\nonumber\\
&&-\frac{n}{8(n-2)}(\mathring{Ric}\varowedge g)_{ijkl}(\mathring{Ric}\varowedge\mathring{Ric})_{ijkl}\nonumber\\
&=&-\frac{1}{4}\left(W+\frac{n}{2(n-2)}\mathring{Ric}\varowedge g\right)_{ijkl}(\mathring{Ric}\varowedge\mathring{Ric})_{ijkl}.
\end{eqnarray}
Before proceeding, remember that $\mathring{Ric}\varowedge\mathring{Ric}$ has the same symmetries of the Riemann tensor, so it can be orthogonally decomposed as
$$\mathring{Ric}\varowedge\mathring{Ric}=T+V+U,$$
where $T$ is totally trace-free,
$$V_{ijkl}=-\frac{2}{n-2}(\mathring{Ric^{2}}\varowedge g)_{ijkl}+\frac{2}{n(n-2)}|\mathring{Ric}|^{2}(g\varowedge g)_{ijkl}$$
and
$$U_{ijkl}=-\frac{1}{n(n-1)}|\mathring{Ric}|^{2}(g\varowedge g)_{ijkl},$$
where $(\mathring{Ric^{2}})_{ij}=\mathring{R}_{ip}\mathring{R}_{pj}.$ Furthermore, it is not difficult to check the following identities
$$|\mathring{Ric}\varowedge\mathring{Ric}|^{2}=8(|\mathring{Ric}|^{4}-|\mathring{Ric^{2}}|^{2}),$$
$$|V|^{2}=\frac{16}{n-2}(|\mathring{Ric^{2}}|^{2}-\frac{1}{n}|\mathring{Ric}|^{4})$$
and
$$|U|^{2}=\frac{8}{n(n-1)}|\mathring{Ric}|^{4}.$$

In the sequel, we deduce
\begin{eqnarray*}
\left|\left(W+\frac{n}{2(n-2)}\mathring{Ric}\varowedge g\right)(\mathring{Ric}\varowedge\mathring{Ric})\right|^{2}&=&\left|\left(W+\frac{n}{2(n-2)}\mathring{Ric}\varowedge g\right)(T+V)\right|^{2}\\
&=&\left|\left(W+\sqrt{\frac{2}{n}}\frac{n}{2(n-2)}\mathring{Ric}\varowedge g\right)\left(T+\sqrt{\frac{n}{2}}V\right)\right|^{2}\\
\end{eqnarray*}
and it follows that
\begin{eqnarray*}
\left|\left(W+\frac{n}{2(n-2)}\mathring{Ric}\varowedge g\right)(\mathring{Ric}\varowedge\mathring{Ric})\right|^{2}&\leq&\left|W+\frac{\sqrt{n}}{\sqrt{2}(n-2)}\mathring{Ric}\varowedge g\right|^{2}(|T|^{2}+\frac{n}{2}|V|^{2})\\
&=&\frac{8(n-2)}{n-1}\left(|W|^{2}+\frac{n}{2(n-2)^{2}}|\mathring{Ric}\varowedge g|^{2}\right)|\mathring{Ric}|^{4},
\end{eqnarray*}
where we used that
$$|T|^{2}+\frac{n}{2}|V|^{2}=|\mathring{Ric}\varowedge\mathring{Ric}|^{2}+\frac{n-2}{2}|V|^{2}-|U|^{2}=\frac{8(n-2)}{n-1}|\mathring{Ric}|^{4}.$$

Now, since $|\mathring{Ric}\varowedge g|^{2}=4(n-2)|\mathring{Ric}|^{2},$ we obtain
\begin{eqnarray*}
\left|\left(W+\frac{n}{2(n-2)}\mathring{Ric}\varowedge g\right)(\mathring{Ric}\varowedge\mathring{Ric})\right|^{2}\leq \frac{8(n-2)}{n-1}\left(|W|^{2}+\frac{2n}{(n-2)}|\mathring{Ric}|^{2}\right)|\mathring{Ric}|^{4}.
\end{eqnarray*}

Finally, returning to Eq. (\ref{auxid}) jointly with above inequality to conclude our estimate.
\end{proof}

Proceeding, as explained in Section~\ref{intro}, every non-trivial solution to CPE metric has positive scalar curvature, so its Yamabe constant must be positive. To be precise, it is well known that, if $M$ is compact, the Yamabe constant  $\mathcal{Y}(M,[g])$ is positive if and only if there exists a conformal metric in $[g]$ with everywhere positive scalar curvature. Therefore, since $\mathcal{Y}(M,[g])>0,$ it is immediate to verify the following Yamabe-Sobolev inequality
\begin{eqnarray}\label{yamsol}
\mathcal{Y}(M,[g])\left(\int_{M}|u|^{\frac{2n}{n-2}}dM_{g}\right)^{\frac{n-2}{n}}&\leq&\frac{4(n-1)}{n-2}\int_{M}|\nabla u|^{2}dM_{g}\nonumber\\
&&+\int_{M}Ru^{2}dM_{g},
\end{eqnarray}
for every $u\in W^{1,2}(M).$

Now, after this preliminaries remarks, we are in position to prove our $L^{n/2}$-pinching result.

\subsection{Proof of Theorem \ref{CPEyamabe}}

\begin{proof}
In what follows, we take $u=(1+f)|\mathring{Ric}|$ in (\ref{yamsol}). Denoting
\begin{equation}\label{auxA}
\phi=\left(\int_{M}|(1+f)|\mathring{Ric}||^{\frac{2n}{n-2}}dM_{g}\right)^{\frac{n-2}{n}},
\end{equation}
it is immediate to verify that
\begin{eqnarray*}
\frac{n-2}{4(n-1)}\phi\mathcal{Y}(M,[g])&\leq&\int_{M}|\nabla((1+f)|\mathring{Ric}|)|^{2}dM_{g}+\frac{n-2}{4(n-1)}\int_{M}R(1+f)^{2}|\mathring{Ric}|^{2}dM_{g}\\
&=&\int_{M}||\mathring{Ric}|\nabla f+(1+f)\nabla|\mathring{Ric}||^{2}dM_{g}\\
&&+\frac{n-2}{4(n-1)}\int_{M}R(1+f)^{2}|\mathring{Ric}|^{2}dM_{g}\\
&=&\int_{M}|\nabla f|^{2}|\mathring{Ric}|^{2}dM_{g}+\frac{1}{2}\int_{M}\langle\nabla(1+f)^{2},\nabla|\mathring{Ric}|^{2}\rangle dM_{g}\\
&&\int_{M}(1+f)^{2}|\nabla|\mathring{Ric}||^{2}dM_{g}+\frac{n-2}{4(n-1)}\int_{M}R(1+f)^{2}|\mathring{Ric}|^{2}dM_{g}.
\end{eqnarray*}
Then, using integration by parts together with (\ref{delaf}) and (\ref{keyequality}) we can rewrite the above expression as
\begin{eqnarray*}
\frac{n-2}{4(n-1)}\phi\mathcal{Y}(M,[g])&\leq&\frac{n+2}{4(n-1)}\int_{M}(1+f)^{2}R|\mathring{Ric}|^{2}dM_{g}+\int_{M}(1+f)^{2}|\nabla|\mathring{Ric}||^{2}dM_{g}.
\end{eqnarray*}
Now, from Kato's inequality and the fact that $M$ has zero radial Weyl curvature, we can use (\ref{KeyID}) to deduce
\begin{eqnarray*}
\frac{n-2}{4(n-1)}\phi\mathcal{Y}(M,[g])&\leq&\frac{n-10}{4(n-1)}\int_{M}(1+f)^{2}R|\mathring{Ric}|^{2}dM_{g}\\
&&-\frac{3n-5}{2(n-1)}\int_{M}(1+f)^{2}|C_{ijk}|^{2}dM_{g}\\
&&-3\int_{M}(1+f)^{2}\left(\frac{n}{n-2}tr(\mathring{Ric}^{3})-W_{ijkl}\mathring{R}_{ik}\mathring{R}_{jl}\right)dM_{g}.
\end{eqnarray*}
Thus, from Lemma~\ref{Keyestimte}, we have
\begin{eqnarray*}
\frac{n-2}{4(n-1)}\phi\mathcal{Y}(M,[g])&\leq&\frac{n-10}{4(n-1)}\int_{M}(1+f)^{2}R|\mathring{Ric}|^{2}dM_{g}\\
&&-\frac{3n-5}{2(n-1)}\int_{M}(1+f)^{2}|C_{ijk}|^{2}dM_{g}\\
&&+\sqrt{\frac{9(n-2)}{2(n-1)}}\int_{M}(1+f)^{2}\left(|W|^{2}+\frac{2n}{n-2}|\mathring{Ric}|^{2}\right)|\mathring{Ric}|^{2}dM_{g}.
\end{eqnarray*}
Next, from H\"older inequality, we obtain
\begin{eqnarray*}
\frac{n-2}{4(n-1)}\phi\mathcal{Y}(M,[g])&\leq&\frac{n-10}{4(n-1)}\int_{M}(1+f)^{2}R|\mathring{Ric}|^{2}dM_{g}\\
&&-\frac{3n-5}{2(n-1)}\int_{M}(1+f)^{2}|C_{ijk}|^{2}dM_{g}\\
&&+\sqrt{\frac{9(n-2)}{2(n-1)}}\phi\left(\int_{M}\left|W+\frac{\sqrt{n}}{\sqrt{2}(n-2)}\mathring{Ric}\varowedge g\right|^{\frac{n}{2}}dM_{g}\right)^{\frac{2}{n}}.
\end{eqnarray*}

Now, taking into account our integral pinching condition, see inequality (\ref{intP}),  we have that
$$\int_{M}(1+f)^{2}|C_{ijk}|^{2}dM_{g}\leq\frac{n-10}{6n-10}\int_{M}(1+f)^{2}R|\mathring{Ric}|^{2}dM_{g}.$$
Therefore, since we are assuming $4\leq n\leq10$ it is easy to see that, except to $n=10$, $M$ is a Einstein manifold and this concludes the proof of Theorem~\ref{CPEyamabe} in this case. Otherwise, for n=10, clearly we have $C_{ijk}=0$ and we are in position to use Theorem 1.2 in \cite{CHY14} (see also \cite{CHY16}) to conclude that $(M^{n}, g)$ is isometric to a standard sphere $\mathbb{S}^{n}.$
\end{proof}

\section{CPE metric satisfying $div^{4}W=0$}\label{div4W}

In the first part of this section we shall establish a integral identity for a CPE metric. Then we will able to prove a rigidity result for such metrics in dimension n, $n\geq4,$ provided that our manifold satisfies $div^{4}W=0.$ This result was motivated by the work of Qing and Yuan~\cite{QY} as well as by the very recent article of Santos~\cite{Alex}. In order to do this, it is worth reporting the following formula for the divergence of the Bach tensor (see \cite{CaoChen} for more details).

\begin{lemma}\label{lemaBACH}
Let $(M^{n},g),$ $n\geq3,$ be a Riemannian manifold. Then
\begin{equation*}
\nabla_{i}B_{ij}=\frac{n-4}{(n-2)^{2}}C_{jks}R_{ks}.
\end{equation*}
\end{lemma}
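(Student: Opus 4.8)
The plan is to start from the second expression for the Bach tensor recorded in equation (\ref{bacha}), namely $(n-2)B_{ij}=\nabla_{k}C_{kij}+W_{ikjl}R_{kl}$, and simply apply the divergence $\nabla_{i}$. This splits the computation into two pieces,
\[
(n-2)\nabla_{i}B_{ij}=\nabla_{i}\nabla_{k}C_{kij}+\nabla_{i}\bigl(W_{ikjl}R_{kl}\bigr),
\]
and the entire argument consists in showing that, after commuting the covariant derivatives and invoking the Weyl/Cotton identities, every genuinely third-order term and every purely Weyl contribution cancels, leaving only the single term $\tfrac{n-4}{(n-2)^{2}}C_{jks}R_{ks}$.

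For the first piece I would exploit the skew-symmetry $C_{kij}=-C_{ikj}$ of the Cotton tensor in its first two slots: relabelling the two summed indices shows $\nabla_{i}\nabla_{k}C_{kij}=\tfrac12[\nabla_{i},\nabla_{k}]C_{kij}$, so the two second derivatives collapse into a single curvature commutator. Applying the Ricci identity (\ref{idRicci1}) in its form for $(0,3)$-tensors yields
\[
[\nabla_{i},\nabla_{k}]C_{kij}=R_{ikks}C_{sij}+R_{ikis}C_{ksj}+R_{ikjs}C_{kis}.
\]
The first two Riemann terms contract to Ricci tensors, while into the third I would insert the Weyl decomposition (\ref{weyl}), $R_{ikjs}=W_{ikjs}+(A\varowedge g)_{ikjs}$, and expand the Kulkarni--Nomizu product using the Schouten tensor (\ref{defschouten}). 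The trace-free property of $C$ annihilates two of the four terms of $A\varowedge g$ and also removes the $g$-part of the Schouten tensor, so this piece reduces to Ricci$\cdot$Cotton terms together with a single pure-Weyl contribution which, since $\nabla_{i}\nabla_{k}C_{kij}$ is half the commutator, enters as $\tfrac12 W_{ikjs}C_{kis}$.

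For the second piece I would write $\nabla_{i}(W_{ikjl}R_{kl})=R_{kl}\nabla_{i}W_{ikjl}+W_{ikjl}\nabla_{i}R_{kl}$. Combining the pair and skew symmetries of $W$ with the Cotton--Weyl identity (\ref{cottonwyel}) converts the divergence of Weyl into a Cotton term, $\nabla_{i}W_{ikjl}=\tfrac{n-3}{n-2}C_{jlk}$, contributing $\tfrac{n-3}{n-2}C_{jks}R_{ks}$; and antisymmetrising $W_{ikjl}\nabla_{i}R_{kl}$ in the indices $(i,k)$ and substituting the Cotton definition (\ref{cotton})—whose scalar-curvature terms vanish against the trace-free Weyl tensor—produces $\tfrac12 W_{ikjl}C_{ikl}$. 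The decisive cancellation is now visible: after relabelling a summed index this last term is $\tfrac12 W_{ikjs}C_{iks}$, which exactly annihilates the $\tfrac12 W_{ikjs}C_{kis}$ left over from the first piece because $C_{kis}+C_{iks}=0$.

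Once the Weyl terms disappear, only Ricci$\cdot$Cotton contractions survive and the final reduction is purely algebraic. I would discard the term $R_{ks}C_{ksj}$, which vanishes since a symmetric tensor is contracted against the first two skew indices of $C$, and then rewrite the remaining terms with the cyclic identity $C_{jks}+C_{ksj}+C_{sjk}=0$ for the Cotton tensor (an immediate consequence of its definition (\ref{cotton})), collecting everything as a multiple of $C_{jks}R_{ks}$; the coefficients combine to $\tfrac{n-4}{n-2}$, and dividing by $n-2$ gives the claim. I expect the main obstacle to lie entirely in the bookkeeping: keeping the Ricci contractions issuing from the commutator consistent with the paper's curvature convention, and checking the two cancellations (the Weyl$\cdot$Cotton terms against each other, and the Ricci$\cdot$Cotton terms collapsing through the cyclic identity). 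There is no analytic difficulty, only the risk of sign errors. Finally, for $n=3$ the Weyl tensor vanishes identically, so the same computation applies with every $W$-term absent and the formula persists.
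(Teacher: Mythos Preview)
The paper does not actually prove this lemma; it only states the formula and refers the reader to Cao--Chen for details. Your outline is correct and is essentially the standard computation carried out in that reference: start from $(n-2)B_{ij}=\nabla_{k}C_{kij}+W_{ikjl}R_{kl}$, take $\nabla_{i}$, use the skew-symmetry of $C$ in its first two slots to collapse $\nabla_{i}\nabla_{k}C_{kij}$ to half a curvature commutator, expand the commutator with the Ricci identity and the Weyl decomposition, and then observe that the two Weyl$\cdot$Cotton contributions (one from the commutator, one from $W_{ikjl}\nabla_{i}R_{kl}=\tfrac{1}{2}W_{ikjl}C_{ikl}$) cancel by the antisymmetry $C_{iks}+C_{kis}=0$. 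The surviving Ricci$\cdot$Cotton terms are then organized via the first-Bianchi-type identity $C_{jks}+C_{ksj}+C_{sjk}=0$ and the vanishing of $R_{ks}C_{ksj}$. Your remark that only bookkeeping with the curvature convention remains is accurate; there is no missing idea, and the $n=3$ case is handled exactly as you say.
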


Now, we will show an integral formula that holds on every CPE metric.

\begin{proposition}\label{div3CCaux}
Let $(M^{n},g,f),$ $n\geq4,$ be a CPE metric. Then, for any $p\geq2,$ we have the following integral identity:
$$\frac{2}{p}\int_{M}(1+f)^{p}div^{3}CdM_{g}+\int_{M}(1+f)^{p}|C_{ijk}|^{2}dM_{g}=\frac{n-2}{n-1}\int_{M}(1+f)^{p-1}T_{ijk}C_{ijk}dM_{g},$$
where $T_{ijk}$ stand for the auxiliary tensor defined in (\ref{TensorT}).
\end{proposition}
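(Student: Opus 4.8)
The plan is to start from the relation $(1+f)C_{ijk}=T_{ijk}+W_{ijkl}\nabla_l f$ from \eqref{CTW} and integrate the contraction $C_{ijk}$ against a suitable weighted quantity. Multiply \eqref{CTW} by $(1+f)^{p-1}C_{ijk}$ and integrate over $M$; this yields
$$\int_M (1+f)^p |C_{ijk}|^2\,dM_g=\int_M (1+f)^{p-1}T_{ijk}C_{ijk}\,dM_g+\int_M (1+f)^{p-1}W_{ijkl}\nabla_l f\,C_{ijk}\,dM_g.$$
So the whole proposition reduces to showing that the last integral equals $-\frac{2}{p}\int_M (1+f)^p\,div^3C\,dM_g$, up to the factor $\frac{n-1}{n-2}$ coming from the way $T$ and $C$ sit inside the final statement — I would track that constant carefully, but it is bookkeeping. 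The substantive task is to handle $\int_M (1+f)^{p-1}W_{ijkl}C_{ijk}\nabla_l f\,dM_g$.

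The key step is to rewrite $\nabla_l f$ using that $(1+f)^{p-1}\nabla_l f=\frac{1}{p}\nabla_l\big((1+f)^p\big)$, so that this integral becomes $\frac{1}{p}\int_M W_{ijkl}C_{ijk}\,\nabla_l\big((1+f)^p\big)\,dM_g$, and then integrate by parts to move the derivative onto $W_{ijkl}C_{ijk}$. Using the Leibniz rule, $\nabla_l(W_{ijkl}C_{ijk})=(\nabla_l W_{ijkl})C_{ijk}+W_{ijkl}\nabla_l C_{ijk}$. For the first piece, \eqref{cottonwyel} gives $\nabla_l W_{ijkl}=-\frac{n-3}{n-2}C_{ijk}$, so $(\nabla_l W_{ijkl})C_{ijk}=-\frac{n-3}{n-2}|C_{ijk}|^2$. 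For the second piece, $W_{ijkl}\nabla_l C_{ijk}$ should, after relabeling indices and using that $W$ is totally trace-free and has the symmetries of the curvature tensor together with the first Bianchi identity, be expressible in terms of $div^3C=\nabla_k\nabla_j\nabla_i C_{ijk}$ — but this is not immediate: $W_{ijkl}\nabla_l C_{ijk}$ has only one derivative, whereas $div^3C$ has three. So I expect the correct route is instead to integrate by parts twice more, successively transferring derivatives via \eqref{cottonwyel}, so that the term with three derivatives on $C$ (equivalently four on $W$) genuinely appears, and lower-order Weyl-curvature contractions telescope. Concretely, I would iterate: write $W_{ijkl}C_{ijk}$, use $C_{ijk}=-\frac{n-2}{n-3}\nabla_m W_{ijkm}$ to trade the explicit $C$ for a divergence of $W$, integrate by parts to land a derivative on $W_{ijkl}$, reapply \eqref{cottonwyel} to recognize another $C$, and repeat until reaching $\nabla_k\nabla_j\nabla_i C_{ijk}$.

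The main obstacle I anticipate is keeping the zero-radial-Weyl hypothesis (or more precisely the full force of the CPE structure) properly in play and controlling the intermediate terms that are neither $|C|^2$ nor $div^3C$ — in particular contractions of the form $W_{ijkl}R_{km}\nabla_m f\,(\cdots)$ or $W\ast\nabla W\ast\nabla f$ that arise from the Leibniz rule and the Bianchi-type identity \eqref{Bianchi}; these must cancel in pairs or vanish by the trace-free symmetries of $W$ and $C$. It is plausible the cleanest argument avoids several of these by starting instead from the divergence of the Bach tensor: Lemma~\ref{lemaBACH} gives $\nabla_i B_{ij}=\frac{n-4}{(n-2)^2}C_{jks}R_{ks}$, and \eqref{bacha} writes $(n-2)B_{ij}=\nabla_k C_{kij}+W_{ikjl}R_{kl}$, so pairing $(1+f)^p$ with $div^2 B$ and integrating by parts produces $div^3C$ on one side and $C\ast R$, $W\ast R$ terms on the other, which Lemma~\ref{L1} and \eqref{CID} convert into $T_{ijk}C_{ijk}$. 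I would develop both routes and take whichever produces the fewest leftover terms; either way, the final identity follows once every term has been identified as a multiple of $\int(1+f)^p|C|^2$, $\int(1+f)^p div^3C$, or $\int(1+f)^{p-1}T_{ijk}C_{ijk}$, and the coefficients are matched.
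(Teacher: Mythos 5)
Your proposal does not yet constitute a proof; the route you actually develop has a structural gap, and the route that works is only gestured at in your final sentences. The main problem is with your plan for the term $\int_M(1+f)^{p-1}W_{ijkl}C_{ijk}\nabla_lf\,dM_g$. After writing $(1+f)^{p-1}\nabla_lf=\frac{1}{p}\nabla_l(1+f)^p$ and integrating by parts, the piece $(\nabla_lW_{ijkl})C_{ijk}$ is fine, but the piece $W_{ijkl}\nabla_lC_{ijk}$ cannot be converted into $div^3C$ by the iteration you describe. If you trade $C_{ijk}$ for $-\frac{n-2}{n-3}\nabla_mW_{ijkm}$ and integrate by parts in $m$, the derivative that lands on $W_{ijkl}$ is $\nabla_mW_{ijkl}$ with $m$ a \emph{free} index, not a contracted divergence, so (\ref{cottonwyel}) no longer applies and the process does not telescope. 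More fundamentally, $div^3C=\nabla_k\nabla_j\nabla_iC_{ijk}$ has its three derivatives contracted against the three indices of $C$, and nothing in the contraction $W_{ijkl}C_{ijk}\nabla_lf$ ever produces that index structure; the object that does is $\nabla_kC_{kij}$, which enters only through the Bach tensor identity $(n-2)B_{ij}=\nabla_kC_{kij}+W_{ikjl}R_{kl}$. There is also a smaller but real bookkeeping error at the start: your first display has coefficient $1$ on $\int(1+f)^{p-1}T_{ijk}C_{ijk}$, so matching the claimed identity requires the Weyl integral to produce not only $-\frac{2}{p}\int(1+f)^p\,div^3C$ but also an extra $-\frac{1}{n-1}\int(1+f)^{p-1}T_{ijk}C_{ijk}$; this is not a mere rescaling by $\frac{n-1}{n-2}$. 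Finally, the proposition assumes nothing about radial Weyl curvature, so that hypothesis should not appear in your argument at this stage (it is used only later, in the proof of Theorem~\ref{CPEdiv4W}).

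The alternative you mention in passing is in fact the paper's proof: compute $(n-2)\int_M(1+f)^p\,div^2B\,dM_g$ in two ways. First, Lemma~\ref{lemaBACH} plus one integration by parts and (\ref{CID}) give $\frac{p(n-4)}{2(n-1)}\int_M(1+f)^{p-1}C_{ijk}T_{ijk}\,dM_g$. Second, (\ref{bacha}) gives $\int_M(1+f)^p\,div^3C\,dM_g$ plus $\int_M(1+f)^p\nabla_j\nabla_i(W_{ikjl}R_{kl})\,dM_g$; integrating the latter by parts and using (\ref{cottonwyel}), (\ref{cotton}), (\ref{CID}) and (\ref{CTW}) turns it into multiples of $\int(1+f)^{p-1}C_{ijk}T_{ijk}$ and $\int(1+f)^p|C_{ijk}|^2$. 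Equating the two expressions yields the stated identity. To complete your write-up you would need to carry out this computation in full rather than listing it as one of two options to be tried.
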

\begin{proof}
Firstly, from Lemma~\ref{lemaBACH} and Eq. (\ref{CID}), we obtain
\begin{eqnarray}\label{divB1}
(n-2)\int_{M}(1+f)^{p}div^{2}BdM_{g}&=&\frac{n-4}{(n-2)}\int_{M}(1+f)^{p}\nabla_{i}(C_{ijk}R_{jk})dM_{g}\nonumber\\
&=&-\frac{p(n-4)}{(n-2)}\int_{M}(1+f)^{p-1}C_{ijk}\nabla_{i}fR_{jk}dM_{g}\nonumber\\
&=&\frac{p(n-4)}{2(n-1)}\int_{M}(1+f)^{p-1}C_{ijk}T_{ijk}dM_{g}.
\end{eqnarray}

On the other hand, from (\ref{bacha}), we achieve
\begin{eqnarray*}
(n-2)\int_{M}(1+f)^{p}div^{2}BdM_{g}&=&\int_{M}(1+f)^{p}\nabla_{j}\nabla_{i}\nabla_{k}C_{kij}dM_{g}\\
&&+\int_{M}(1+f)^{p}\nabla_{j}\nabla_{i}(W_{ikjl}R_{kl})dM_{g}\\
&=&\int_{M}(1+f)^{p}div^{3}CdM_{g}\\
&&-p\int_{M}(1+f)^{p-1}\nabla_{j}f\nabla_{i}(W_{ikjl}R_{kl})dM_{g}\\
&=&\int_{M}(1+f)^{p}div^{3}CdM_{g}-p\int_{M}(1+f)^{p-1}\nabla_{j}f\nabla_{i}W_{ikjl}R_{kl}dM_{g}\\
&&-p\int_{M}(1+f)^{p-1}\nabla_{j}fW_{ikjl}\nabla_{i}R_{kl}dM_{g}.
\end{eqnarray*}
So, it is not difficult to verify, using (\ref{cotton}) and (\ref{cottonwyel}), that the last expression can be rewritten as
\begin{eqnarray}\label{div3Caux}
(n-2)\int_{M}(1+f)^{p}div^{2}BdM_{g}&=&\int_{M}(1+f)^{p}div^{3}CdM_{g}\nonumber\\
&&-\frac{p(n-3)}{n-2}\int_{M}(1+f)^{p-1}C_{jlk}\nabla_{j}fR_{kl}dM_{g}\nonumber\\
&&-\frac{p}{2}\int_{M}(1+f)^{p-1}\nabla_{j}fW_{ikjl}C_{ikl}dM_{g}.
\end{eqnarray}
Now, the Equations (\ref{CID}) and (\ref{CTW}) substituted in (\ref{div3Caux}) allow us to conclude that
\begin{eqnarray}\label{divB2}
(n-2)\int_{M}(1+f)^{p}div^{2}BdM_{g}&=&\int_{M}(1+f)^{p}div^{3}CdM_{g}+\frac{p(n-3)}{2(n-1)}\int_{M}(1+f)^{p-1}C_{jlk}T_{jlk}dM_{g}\nonumber\\
&&+\frac{p}{2}\int_{M}(1+f)^{p-1}(fC_{ikl}-T_{ikl})C_{ikl}dM_{g}\nonumber\\
&=&\int_{M}(1+f)^{p}div^{3}CdM_{g}-\frac{p}{n-1}\int_{M}(1+f)^{p-1}C_{jlk}T_{jlk}dM_{g}\nonumber\\
&&+\frac{p}{2}\int_{M}(1+f)^{p}|C_{ijk}|^{2}dM_{g}.
\end{eqnarray}
Finally, comparing (\ref{divB1}) and (\ref{divB2}) we get the desired result.
\end{proof}

\begin{remark}
In three dimensional case, such integral formula was obtained by Qing and Yuan in \cite{QY}, see \cite[Corollary 4.1]{QY}.
\end{remark}

\subsection{Proof of Theorem~\ref{CPEdiv4W}}

\begin{proof}
Since we are considering $M$ satisfying zero radial Weyl curvature condition, it is immediate to see using Proposition~\ref{div3CCaux} and Eq. (\ref{CTW}) that,
\begin{equation}\label{div3CN}
\frac{2}{p}\int_{M}(1+f)^{p}div^{3}CdM_{g}+\frac{1}{n-1}\int_{M}(1+f)^{p}|C_{ijk}|^{2}dM_{g}=0.
\end{equation}
Hence, taking into account that our manifold has fourth order divergence-free Weyl tensor (i.e., ${\rm div}^{4}W=0$), we use (\ref{div3Cdiv4W}) and (\ref{div3CN}) jointly with Proposition~\ref{propLS} in order to deduce that the Cotton tensor vanishes identically. Therefore we may invoke Theorem 1.2 in \cite{CHY14} (see also \cite{CHY16}) to conclude that $(M^{n}, g)$ is isometric to a standard sphere $\mathbb{S}^{n}.$
\end{proof}

\begin{acknowledgement}
The author want to thank M. Matos Neto for helpful discussions about this subject.
\end{acknowledgement}

\end{document}